\crefname{hypothesis}{Hypothesis}{Hypotheses}
\title{Global Polynomial Level Sets for Numerical Differential Geometry of Smooth Closed Surfaces\thanks{
\funding{This work was partially funded by the Center of Advanced Systems Understanding (CASUS), financed by Germany's Federal Ministry of Education and Research (BMBF) and by the Saxon Ministry for Science, Culture and Tourism (SMWK) with tax funds on the basis of the budget approved by the Saxon State Parliament.
}}}
\author{Sachin Krishnan Thekke Veettil\thanks{Technische Universit\"{a}t Dresden,
Faculty of Computer Science, Dresden, Germany.\\
\indent \indent Max Planck Institute of Molecular Cell Biology and Genetics, Dresden, Germany.\\
\indent \indent Center for Systems Biology Dresden, Dresden Germany
}
\and Gentian Zavalani\thanks{Center for Advanced Systems Understanding (CASUS), G\"{o}rlitz, Germany.}
\and Uwe Hernandez Acosta\footnotemark[3]
\and Ivo F. Sbalzarini\footnotemark[2] \thanks{Center for Scalable Data Analytics and Artificial Intelligence ScaDS.AI, Dresden, Germany.}
\and Michael Hecht\footnotemark[3] \thanks{Corresponding author. Email: m.hecht@hzdr.de}}
\newcommand{\R}{\mathbb{R}}
\newcommand{\C}{\mathbb{C}}
\newcommand{\N}{\mathbb{N}}
\newcommand{\Mc}{\mathcal{M}}
\newcommand{\Lc}{\mathcal{L}}
\newcommand{\Pc}{\mathcal{P}}
\newcommand{\dist}{\mathrm{dist}}
\newcommand{\Cheb}{\mathrm{Cheb}}
\newcommand{\Oc}{\mathcal{O}}
\newcommand{\ee}{\varepsilon}
\newcommand{\lo}{\longrightarrow}
\newcommand{\li}{\left}
\newcommand{\re}{\right}
\newcommand{\rank}{\mathrm{rank}}
\newcommand{\p}{\partial}
\newcommand{\GR}{\mathrm{Gr}}
\newtheorem{experiment}{\sc Experiment}
{\bf}{\it}
\begin{document}

\maketitle

\begin{abstract}
We present a computational scheme that derives a global polynomial level set parametrisation for smooth closed surfaces from a regular
surface-point set and prove its uniqueness. This enables us to approximate a broad class of smooth surfaces by affine algebraic varieties. From such a
global polynomial level set parametrisation, differential-geometric quantities like mean and Gauss curvature can be efficiently and accurately computed. Even 4$^{\text{th}}$-order terms such as the Laplacian of mean curvature are approximates with high precision. The accuracy performance results in a gain of computational efficiency, significantly reducing the number of surface points required compared to classic alternatives that rely on surface meshes or embedding grids. We mathematically derive and empirically demonstrate the strengths and the limitations of the present approach, suggesting it to be applicable to a large number of computational tasks in numerical differential geometry.
\end{abstract}

\begin{keywords}
Numerical differential geometry, surface approximation, mean curvature, Gauss curvature, level set, surface diffusion
\end{keywords}

\begin{MSCcodes}
53Z50, 65D18
\end{MSCcodes}

\section{Introduction}

Classic differential geometry of closed two-dimensional surfaces $S \subseteq \R^3$, $\partial S = \emptyset$, goes back to Carl Friedrich Gauss \cite{gauss2005general,kuhnel2005differential}, Bernhard Riemann \cite{berger2000riemannian,jost2008riemannian}, and others.
Numerically computing or approximating such surfaces'
main geometric quantities, like \emph{Gauss and mean curvature}, is of fundamental importance
across scientific disciplines such as \emph{biophysics} \cite{mietke2019minimal}, mechanics \cite{schwartz1998simulation,sander2016numerical},
medical imaging \cite{khan2018single}, sociology \cite{gomes20143d},
and computer graphics \cite{Calakli2011SSDSS,taubin2012smooth}.
High accuracy of these approximations is key in many applications, including dynamic surface models where deformations are governed
 by  the \emph{intrinsic Laplacian of curvature}~\cite{sethian1999motion}, \emph{surface diffusion}~\cite{smereka2003,greer2006fourth}, and the \emph{dynamics of cell membranes and vesicles}~\cite{seifert1997configurations}. Such models that require accurate numerical computation of 4$^\text{th}$-order differential terms, such as the Laplacian of mean curvature, present a challenge to available numerical methods.

We here address this challenge by combining algebraic geometry with classic numerical analysis in order to
 formulate a mathematical theory that enables us to
approximate smooth closed surfaces $S \approx Q_S^{-1}(0)$ by algebraic varieties (i.e., hypersurfaces) with global polynomial level set (GPLS) parametrisation $Q_S$.
As we demonstrate here, the GPLS can be numerically computed in an efficient way for a large class of surfaces. The GPLS can subsequently be used to compute geometric quantities with high precision, enabling efficient approximation of higher-order quantities.


\section{Related work}
The importance of the present computational challenge is manifest in the large number of previous works.
Consequently, an exhaustive overview of the literature cannot be given here.
Instead, we restrict ourselves to mentioning those contributions that directly relate to or inspired our work.
This includes methods where tracer points $P \subseteq S$ are placed on the surface in order to approximate $S$ by \emph{local interpolation} over finite neighbourhoods. Well-established interpolation methods include approaches based on \emph{B-splines} \cite{deBoor1971,deBoor1978, barnhill1974}, \emph{Galerkin mesh-based} \cite{ruppert1995delaunay,shewchuk2002delaunay} \emph{finite element methods} \cite{fletcher1984computational,Fletcher1984,deckelnick2005,dziuk2007,dziuk2013,sander2020},
and triangulated surface methods \cite{Kroll1992}.

Alternatively, surfaces can be approximated by a discrete or discretised implicit representation. This includes
\emph{level set methods}
 \cite{osher1988fronts,sethian1996theory,sethian1997tracking,sethian1999level,osher2005level}, \emph{local kernel (radial basis function) parametrisations}
 \cite{ztireli2009FeaturePP, Carr2001ReconstructionAR,Casciola2006ShapePS,Huang2009ConsolidationOU}, \emph{closest point methods}
 \cite{ruuth2008simple,macdonald2008level,macdonald2011solving,macdonald2013simple}, and \emph{phase field methods}~\cite{ratz2006diffuse,ratz2006pde,provatas2011phase}.

All of these approaches have in common that they approximate the surface with discrete points, meshes, or grids. Then, differential geometric quantities are computed on these discrete surface approximations using numerical methods. This introduces a second approximation, namely of the (surface) differential operators by their discretizations on the discrete surface approximation. This often prevents reaching the levels of accuracy required to compute higher-order geometric quantities.

The present approach avoids the second approximation by representing the surface globally as an algebraic variety. This leads to a surface representation that is continuous (even smooth) and defined everywhere, albeit supported on a finite set of discrete surface points. Doing so in a proper polynomial basis, we can compute any differential quantities to machine precision without introducing another approximation. While it has long been known that polynomial surface approximations have some desirable properties, and some methods have computed them piecewise,
see e.g., \cite{piegl1996nurbs,gershenfeld1999nature},
we here exploit a recent advancement in polynomial interpolation \cite{IEEE} that allows us to compute such representations globally.

\section{Main results}
\label{sec:main}

The GPLS method presented here determines a multivariate polynomial $Q_S(x)$, $x \in \R^3$, from points on a surface $P \subseteq S$ such that the surface is approximated by the zero-level set (zero contour) $S \approx Q_S^{-1}(0)$ of the polynomial. We do so for two classes of closed surfaces $S \subseteq \R^3$, $\p S=\emptyset$:
\begin{enumerate}
  \item[C1)] algebraic varieties $M$ of low degree: in this case the GPLS approach amounts to a \emph{mesh-free particle method} that only relies on \emph{a regular
  point set} $P \subseteq M$ sampled on the surface and does not require any surrounding (narrow band) grid or mesh.
  \item[C2)] non-algebraic surfaces $S$: in this case the GPLS approach amounts to fitting a global polynomial approximation of a given discretised \emph{(relaxed) signed distance function} and yields an alternative classic level set redistancing methods \cite{sussman1998improved,sussman1999efficient}.
\end{enumerate}

In both cases, $Q_M^{-1}(0)$ approximates (up to the interpolation or fitting error) the original surface by a uniquely determined algebraic variety, see Theorem~\ref{theorem:Dual}$(iv,v,vi)$. This means that regardless of whether the original surface was algebraic or not, the GPLS approximation of it always is. It also means that the exact same (unique) surface approximation is obtained from a given set of surface points $P$ regardless of the maximum polynomial degree chosen. The unique polynomial approximation of the surface can then be used to accurately compute \emph{mean and Gauss curvature} as well as their derivatives, e.g., the \emph{Laplacian of mean curvature}, as we demonstrate in the numerical experiments of Section \ref{sec:Num}.

\subsection{Notation}

Let $m,n \in \N$, $p>0$. Throughout this article, $\Omega=[-1,1]^m$ denotes the $m$-dimensional \emph{standard hypercube} and $C^0(\Omega,\R)$ the
\emph{Banach space}
of continuous functions $f : \Omega \lo \R$ with norm $\|f\|_{C^0(\Omega)} = \sup_{x \in \Omega}|f(x)|$. We denote by  $e_1=(1,0,\dots,0),\, \dots,\, e_m = (0,\dots,0,1) \in \R^m$ the standard basis, by $\|\cdot\|_p$ the $l_p$-norm on $\R^m$, and by $\|M\|_p$ the $l_p$-norm
of a matrix $M\in \R^{m\times m}$. Further, $A_{m,n,p} \subseteq \N^m$ denotes all multi-indices $\alpha =(\alpha_1,\dots,\alpha_m)\in \N^m$ with $\|\alpha\|_p \leq n$.
We order $A_{m,n,p}$ with respect to the lexicographical order $\preceq$ on $\N^m$ going from the last entry to the first, e.g.,
$(5,3,1)\preceq (1,0,3) \preceq (1,1,3)$.
We call $A$
\emph{downward closed} if and only if there is no $\beta = (b_1,\dots,b_m) \in \N^m \setminus A$
with $b_i \leq a_i$, $ \forall \,  i=1,\dots,m$ for some $\alpha = (a_1,\dots,a_m) \in A$ \cite{cohen3}.

The sets $A_{m,n,p}$ are downward closed for all $m,n\in \N$, $p>0$, and induce a generalised notion of \emph{polynomial $l_p$-degree} as follows:
We consider the \emph{real polynomial ring} $\R[x_1,\dots,x_m]$  in $m$ variables and denote by $\Pi_m$ the $\R$-\emph{vector space of all real polynomials} in $m$ variables.
For $A\subseteq \N^m$, $\Pi_{A} \subseteq \Pi_m$ denotes the \emph{polynomial subspace} $\Pi_A = \mathrm{span}\{x^\alpha\}_{\alpha \in A}$ spanned by the (unless further specified) \emph{canonical} (monomial) \emph{basis}. Choosing $A =A_{m,n,p}$ yields the spaces
$\Pi_{A_{m,n,p}}$.
The particular cases of \emph{total degree} $A = A_{m,n,1}$, \emph{Euclidean degree} $A= A_{m,n,2}$, and \emph{maximum degree} $A= A_{m,n,\infty}$
will play an important role for the polynomial approximation quality. As noticed by \cite{Lloyd2}, the sizes of these sets scale polynomially, sub-exponentially, and exponentially with dimension, respectively:
\begin{equation}\label{eq:size}
  |A_{m,n,1}| = \binom{m+n}{n} \in \Oc(m^n)\,, \,\,\, |A_{m,n,2}|\approx \frac{(n+1)^m }{\sqrt{\pi m}} \li(\frac{\pi \mathrm{e}}{2m}\re)^{m/2} \in o(n^m)\,, \,\,\, |A_{m,n,\infty}| = (n+1)^m\,.
\end{equation}
Given linear ordered sets $A \subseteq \N^m$, $B \subseteq \N^n$, we slightly abuse notation by writing matrices $R_{A,B}\in \R^{|A|\times |B|}$ as
\begin{equation}
 R_{A,B} = (r_{\alpha,\beta})_{\alpha\in A, \beta \in B} \in \R^{|A|\times|B|}\,,
\end{equation}
where $r_{\alpha,\beta} \in \R$ is the $\alpha$-th, $\beta$-th entry of $R_{A,B}$.
Finally, we use the standard \emph{Landau symbols} $f \in \Oc(g)  \Longleftrightarrow \lim \sup_{x\rightarrow \infty} \frac{|f(x)|}{|g(x)|} \leq \infty$, $f \in \omicron(g) \Longleftrightarrow \lim_{x\rightarrow \infty} \frac{|f(x)|}{|g(x)|} =0$.

\section{Unisolvent nodes and multivariate interpolation}
\label{sec:UN}

We briefly summarise here the essential concepts from \cite{cohen2,cohen3,PIP1,PIP2,MIP,IEEE} on which our approach rests, in particular the notion of unisolvence with respect to generalised polynomial degree.

\subsection{The notion of unisolvence}
For a downward closed multi-index set $A \subseteq \N^m$, $m\in N$, and the induced polynomial space $\Pi_A$, a set of nodes $P \subseteq \Omega$ is called \emph{unisolvent with respect to $\Pi_A$}
if and only if there exists no hypersurface $H = Q^{-1}(0)$
generated by a polynomial $0\not =Q \in \Pi_A$ with $P \subseteq H$.
The opposite notion of non-unisolvent nodes $P$ allows us to derive global polynomial hypersurfaces $Q_S^{-1}(0) \supseteq P$ that contain a given (regular) point set $P \subseteq S$ and approximate the initial surface $S$, see Section~\ref{sec:Dual}.
The following concepts are useful in the derivation:

\begin{definition}[1$^\text{st}$  and 2$^\text{nd}$  essential assumptions]
\label{def:EA}
Let $m \in \N$, $A\subseteq \N^m$ be a downward closed set of multi-indices, and $\Pi_A \subseteq \Pi_m$ the polynomial sub-space induced by $A$.
We consider the generating nodes given by the grid
\begin{equation}\label{GP}
\mathrm{GP}= \oplus_{i=1}^m P_i\,, \quad P_i =\{p_{0,i},\dots,p_{n_i,i}\} \subseteq \R \,, \,\,\,n_i=\max_{\alpha \in A}(\alpha_i)\,,
\end{equation}
\begin{equation}\label{eq:UN}
   P_A = \li\{ (p_{\alpha_1,1}\,, \dots \,, p_{\alpha_m,m} ) : \alpha \in A\re\} \,.
 \end{equation}
\begin{enumerate}
  \item[A1)] If the $P_i \subseteq [-1,1]$ are arbitrary distinct points then the node set $P_A$
  is said to satisfy the \emph{1$^\text{st}$ essential assumption}.
  \item[A2)] We say that the \emph{2$^\text{nd}$ essential assumption} holds if in addition the $P_i$ are chosen as the Chebyshev-Lobatto nodes that, in addition, are Leja-ordered \cite{leja}, i.e,
  \begin{equation*}
  P_i =\{p_0,\dots,p_n\} = \pm \Cheb_n = \li\{ \cos\Big(\frac{k\pi}{n}\Big) : 0 \leq k \leq n\re\}
  \end{equation*}
  and the following holds
  \begin{equation}\label{LEJA}
   |p_0| = \max_{p \in P}|p|\,, \quad \prod_{i=0}^{j-1}|p_j-p_i| = \max_{j\leq k\leq m} \prod_{i=0}^{j-1}|p_k-p_i|\,,\quad 1 \leq j \leq n\,.
  \end{equation}
\end{enumerate}
\end{definition}
Note that $\{p_0,p_1\} = \{-1,1\}$ for all Leja-ordered Chebyshev-Lobatto nodes $\Cheb_n$ with $n\geq 1$.
Points $P_A$ that fulfil the \emph{1$^\text{st}$ essential assumption} form non-tensorial (non-symmetric) grids and are unisolvent with respect to $\Pi_A$ \cite{cohen2,cohen3,PIP1,PIP2,MIP,IEEE}.
This allows generalising classic interpolation approaches to higher dimensions.

\subsection{Multivariate Newton and Lagrange interpolation}

Given unisolvent nodes $P_A$, multivariate generalisations of the classic 1D Newton and Lagrange interpolation schemes can be derived, see e.g., \cite{LIP,IEEE}:
\begin{definition}[Multivariate Lagrange polynomials] Let $m \in \N$, $A\subseteq \N^m$ be a downward closed set of multi-indices, $P_A\subseteq \Omega$ be a set of unisolvent nodes satisfying $(A1)$ from Definition~\ref{def:EA}, and $\Pi_A =\mathrm{span}\{x^\alpha\}_{\alpha \in A}\subseteq \Pi_m$ be the corresponding canonical polynomial space.
We define the \emph{multivariate Lagrange polynomials}  $L_\alpha \in \Pi_A$ by
\begin{equation}
  L_\alpha(p_\beta) = \delta_{\alpha,\beta}\,,
\end{equation}
where $\delta_{\cdot,\cdot}$ is the Kronecker delta.
\end{definition}

Since the $|A|$-many Lagrange polynomials are linearly independent functions, and \linebreak
$\dim \Pi_A=|A|$, the Lagrange polynomials are a basis of $\Pi_A$.
Consequently, any function $f : \Omega \lo \R$ possesses a unique interpolant $Q_{f,A} \in \Pi_A$ with $Q_{f,A}(p_{\alpha}) = f(p_{\alpha})$ $\forall \alpha \in A$, given by
\begin{equation}
  Q_{f,A}(x) = \sum_{\alpha \in A}f(p_{\alpha})L_{\alpha}(x)\,, \quad x \in \R^m\,.
\end{equation}
However, this does not allow for efficient  evaluation of $Q_{f,A}$ at an argument $x_0 \not \in P_A \subseteq \R^m$.
For that, the Newton basis of $\Pi_A$ is better suited:

\begin{definition}[Multivariate Newton polynomials] Let $m \in \N$, $A\subseteq \N^m$ be a downward closed set of multi-indices, $P_A\subseteq \Omega$ be a set of unisolvent nodes satisfying $(A1)$ from Definition~\ref{def:EA}. Then,
the \emph{multivariate Newton polynomials} are given by
 \begin{equation}\label{Newt}
  N_\alpha(x) = \prod_{i=1}^m\prod_{j=0}^{\alpha_i-1}(x_i-p_{j,i}) \,, \quad \alpha \in A\,,
 \end{equation}
 where $p_{j,i} \in P_i$ from Eq.~\eqref{GP}.
 \label{def:LagP}
\end{definition}

In dimension $m=1$, both of these definitions reduce to the classic definitions of Lagrange and Newton polynomials, see e.g.,\cite{gautschi,Stoer,Lloyd}. In arbitrary dimensions, efficient algorithms exist for computing the interpolant in Newton form as well as for its evaluation at any argument $x_0 \in\R^m$ and its differentiation, see \cite{minterpy}.


\section{The dual notion of unisolvence}\label{sec:Dual}

For the purpose of surface approximation, we use the dual notion of unisolvence.
Rather than asking for nodes $P \subseteq \R^m$, $m \in \N$, that are unisolvent with respect to a given polynomial space $\Pi$, already \cite{deBoor2,deBoor}
asked the dual question of finding a polynomial space $\Pi$ with respect to which a given set of points $P \subseteq \Omega$ is unisolvent. We here formulate this question in a generalised way.

\subsection{Unisolvent polynomial spaces}
In order to formulate the dual notion of unisolvence,
it is useful to consider the \emph{Grassmann manifold} $\GR(k,X)$, i.e.,
the smooth manifold that consists of all $k$-dimensional subspaces of
the vector space $X$ \cite{milnor}. In particular, $\GR(1,\R^m) = \mathbb{RP}^{m-1}$ and $\GR(1,\C^m) = \mathbb{CP}^{m-1}$ are the real and complex \emph{projective spaces}, respectively~\cite{dieudonne1971elements,hatcher_vec}.
Using this notion, we state:
\begin{theorem} \label{theorem:Gamma} Let $m,k \in \N$, $\Pc_k = \li\{ P \subseteq \R^m : |P| =k\re\}$ be the set of all subsets of $\R^m$ of cardinality $k$, and $X = \Pi_{m,k-1,\infty}$ the space of all polynomials
with $l_\infty$-degree at most $k-1$. Then, there is one and only one polynomial subspace $\Pi_P \subseteq X$ of dimension $\dim \Pi_P = k$ such that $P$ is unisolvent with respect $\Pi_P$.
In particular, the map
\begin{equation}\label{Gamma}
  \Gamma_k : \Pc_k \lo \GR(k,X)\,,   \quad P \in  \Pc_k  \,\,\,\text {is unisolvent with respect to} \,\,\, \Gamma_k(P)\,,   \quad X= \Pi_{m,k-1,\infty} ,
\end{equation}
is well-defined and smooth.
\end{theorem}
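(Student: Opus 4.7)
The plan is to recast the claim as a statement about the rank of a Vandermonde-type matrix and then to read off both the canonical subspace $\Pi_P$ and its smooth dependence on $P$ from standard linear-algebraic data.

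First, I fix the monomial basis $\{x^\alpha\}_{\alpha\in A_{m,k-1,\infty}}$ of $X$ and, for $P=\{p_1,\dots,p_k\}\in\Pc_k$, consider the evaluation matrix $V(P)=(p_i^\alpha)_{1\le i\le k,\,\alpha\in A_{m,k-1,\infty}}\in\R^{k\times k^m}$ representing the evaluation map $\mathrm{ev}_P:X\to\R^k$. I would first show that $V(P)$ has full row rank $k$ throughout $\Pc_k$: given $j\in\{1,\dots,k\}$, for every $i\ne j$ pick a coordinate index $\iota(i)$ on which $p_j$ and $p_i$ differ, form the affine factor $\ell_i(x)=(x_{\iota(i)}-(p_i)_{\iota(i)})/((p_j)_{\iota(i)}-(p_i)_{\iota(i)})$, and take the product over $i\ne j$. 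This yields a polynomial $L_j\in X$ of $l_\infty$-degree at most $k-1$ satisfying $L_j(p_\ell)=\delta_{j\ell}$, so $\mathrm{ev}_P$ is surjective.

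Existence and the unisolvence property then follow by taking $\Pi_P$ to be the image of the adjoint $\mathrm{ev}_P^{*}$, equivalently the row span of $V(P)$ inside $X$, i.e.\ the orthogonal complement of $\ker(\mathrm{ev}_P)$ with respect to the monomial-coefficient inner product on $X$. The restriction $\mathrm{ev}_P|_{\Pi_P}$ is represented by the Gram matrix $V(P)V(P)^{\top}$, which is symmetric positive definite because $V(P)$ has full row rank, so $\mathrm{ev}_P|_{\Pi_P}$ is an isomorphism; in particular no nonzero $Q\in\Pi_P$ vanishes on $P$, which is unisolvence. For the uniqueness clause I would invoke the least polynomial interpolant construction of \cite{deBoor2,deBoor}: the subspace just built coincides with it and is intrinsically characterized inside $X$, hence singled out as a single element of $\GR(k,X)$. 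This is the step I expect to be the main obstacle, since any $k$-dimensional vector-space complement of $\ker(\mathrm{ev}_P)$ in $X$ already yields unisolvence, so the whole content of the uniqueness assertion is that the canonical-choice mechanism (here encoded by the orthogonality / least-term criterion) distinguishes one complement from the rest.

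For smoothness of $\Gamma_k:\Pc_k\to\GR(k,X)$, the map $P\mapsto V(P)$ is polynomial in the coordinates of the points and, by the first step, $V(P)$ has constant rank $k$ on $\Pc_k$. Hence $\ker(\mathrm{ev}_P)\subseteq X$ forms a smooth rank-$(k^m-k)$ subbundle of the trivial bundle $\Pc_k\times X$, and so does its fibrewise orthogonal complement $\Pi_P$. Concretely, the orthogonal projector onto $\Pi_P$ is $V(P)^{\top}(V(P)V(P)^{\top})^{-1}V(P)$, whose entries are rational functions of the coordinates of $P$ with nowhere-vanishing denominator; the classifying map of this smooth projector into $\GR(k,X)$ is exactly $\Gamma_k$, and smoothness follows.
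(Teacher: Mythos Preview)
Your full-rank argument for $V(P)$ is precisely the paper's: both construct, for each point $p_j$, a separating polynomial in $X$ as a product of affine factors along coordinates on which $p_j$ differs from the remaining $p_i$. The paper phrases it as a proof by contradiction that $\mu<k$ is impossible; you do it directly, but the content is identical.

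Where you and the paper diverge is on uniqueness. The paper does not in fact single out a subspace of $X$: it sets $\Gamma_k(P):=X/\mathfrak{m}(P)$ with $\mathfrak{m}(P)=\{Q\in X:Q|_P\equiv 0\}=\ker(\mathrm{ev}_P)$, a \emph{quotient} of $X$, and the uniqueness established is simply the uniqueness of this kernel. The identification $X/\mathfrak{m}(P)\cong\mathrm{span}(\Lc_i)$ depends on the particular solution $C$ of $RC=D$ and is not intrinsic. Your orthogonal-complement construction, by contrast, genuinely picks a subspace once the inner product is fixed, and your observation that literal uniqueness of a subspace fails without such a mechanism is correct and more honest than what the paper writes. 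One caution: the row span you build does \emph{not} in general coincide with the de Boor--Ron least space (for $P=\{(0,0),(1,1)\}$ in $m=2$, $k=2$, your row span is $\mathrm{span}\{1,\,x+y+xy\}$ whereas the least space is $\mathrm{span}\{1,\,x+y\}$), so drop that appeal; it is anyway unnecessary, since the orthogonal complement is already canonical. Your smoothness argument via the explicit rational projector $V^\top(VV^\top)^{-1}V$ is substantially more complete than the paper's single sentence ``the $\Lc_i$ depend smoothly on $P$''.
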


For $m=1$ we have $X= \Pi_{m,k-1,\infty} = \Pi_{1,k-1,1}$ and $\GR(k,X) = X$. Since $k$ distinct nodes are unisolvent in dimension 1 with respect to $X$, Theorem \ref{theorem:Gamma} becomes trivial, and $\Gamma_k(P)\equiv X$ is constant in that case.

\begin{proof} According to $(A1)$ from Definition~\ref{def:EA}, we choose unisolvent nodes  $P_{A_{m,k-1,\infty}}$ with respect to $X =\Pi_{m,k-1,\infty}$, $\dim X = k^m$, and
denote by $L_\alpha \in X$, $\alpha \in A_{m,k-1,\infty}$, the corresponding Lagrange basis. We fix an ordering $P =\{p_1,\dots,p_k\}$ and consider the corresponding Vandermonde matrix
\begin{equation}\label{eq:R}
  R_{A,P}=(r_{i,\alpha}) \in \R^{|P| \times k^m}\,,\quad r_{i,\alpha} = L_\alpha (p_i)\,, \quad i=1,\dots,|P|\,,  \alpha \in A_{m,k-1,\infty}\,.
\end{equation}
Let $\mu =\rank(R_{A,P})\leq |P| =k$ be the rank of $R$ and $D = \mathrm{diag}(\overbrace{1,\dots,1}^{\mu},\overbrace{0,\dots,0}^{k-\mu}) \in \R^{k \times k}$ the diagonal matrix with the first $\mu$ entries equal to 1 and all others equal to 0.
Let further $C\in \R^{k^m \times k}$ be a solution of $RC = D$ and $C_i =(c_{\alpha,i}) \in \R^{k^m}$ be the $i$-th column of $C$. Then
$$ \Lc_i(x) = \sum_{\alpha \in A_{m,k-1,\infty}} c_{\alpha,i} L_{\alpha}(x) \in X\,, \quad i =1 ,\dots,\mu$$
yields Lagrange polynomials with $\Lc_i(p_j) = \delta_{i,j}$, $1 \leq j \leq \mu$, where $\delta_{\cdot,\cdot}$ denotes the Kronecker delta.
Thus, the $\Lc_i$ are linearly independent.

We argue by contradiction to show that $\mu =k$. Indeed,
if $\mu <k$ then there is no polynomial $Q \in \Pi_{A_{m,k-1,\infty}}$ with $Q(p_j) = 0$ $\forall j =1 ,\dots, \mu$ and  $Q(p_{l}) =1$ for $l >\mu$.
We denote by $p_j=(p_{j,i})_{i=1,\ldots,m}\in P \subseteq \R^m$ the coordinates of the $p_j$, $1 \leq j \leq k$. Then, there is a sequence $p_{j,i_j}$, $1 \leq i_j \leq m$, of coordinate entries such that
$p_{l,i_l}\neq p_{j,i_j}$ for all $1\leq j\leq \mu$. Consequently, setting
$$ Q(x)=  \frac{\prod_{j=1}^\mu (x_{i_j} - p_{j,i_j}) }{\prod_{j=1}^\mu (p_{l,i_l} - p_{j,i_j})} \in \Pi_{A_{m,\mu,\infty}}\subseteq \Pi_{m,k-1,\infty}\,, \quad  l > \mu$$
provides such a polynomial, contradicting $\mu < k$.
Thus, setting $\mathfrak{m}(P) := \{Q \in X : Q(p) = 0 \, \,\forall p \in P\}$ implies that
$$ \Gamma_k(P):= X/\mathfrak{m}(P) \cong \mathrm{span} (\Lc_i)_{i=1,\dots,k}$$
is the uniquely determined polynomial subspace for which $P$ is unisolvent. Since $\GR(k,X)$ is a smooth manifold \cite{milnor}, and the $\Lc_i$ depend smoothly on $P$, this shows that
$\Gamma_k : \Pc_k \lo \GR(k,X)$ is a well-defined smooth map.
\end{proof}

Theorem~\ref{theorem:Gamma} guarantees that
the algebraic variety $M = Q_M^{-1}(0)$ we derive as GPLS from given points $P \subseteq S$ is uniquely determined.

\section{Algebraic varieties and polynomial hypersurfaces}

Formulating the practical consequences of
Theorem \ref{theorem:Gamma} relies on concepts from algebraic geometry.
An excellent overview of these topics is given by \cite{hatcher}. We start by stating:

\begin{theorem}\label{theorem:Dual} Let $m \in \N$, $A\subseteq \N^m$ be a downward closed set of multi-indices, $P_A\subseteq \Omega$ be a set of unisolvent nodes satisfying $(A1)$ from Definition~\ref{def:EA}. Denote by $\{L_{\alpha}\}_{\alpha \in A} \subseteq \Pi_A$ the Lagrange basis with respect to $\Pi_A$ and $P_A$. Let further $\Gamma_k$ be as in Theorem \ref{theorem:Gamma}.
Given any set of points $P \subseteq \R^m$, the following holds:
\begin{enumerate}
 \item[i)] There is  a set $P_0 \subseteq P$ of maximum cardinality $k=|P_0|$, which can be determined in $\Oc(|A|^3)$ operations, such that $\Gamma_k(P_0) \subseteq \Pi_A$.
 \item[ii)] A Lagrange basis $\{\Lc_1,\dots,\Lc_{k}\} \subseteq \Gamma_k(P_0)$  with $\Lc_i(p_j) = \delta_{i,j}$, $1\leq (i,j) \leq k$, $p_{j} \in P_0$, and a basis
 $\{\Mc_1,\ldots,\Mc_{|A|-k})\} \subseteq \Pi_{A}/\Gamma_k(P_0)$ of the quotient space can be computed in $\Oc(|A|^3)$ operations.
 \item[iii)]
 Consider the affine algebraic variety $M = Q_M^{-1}(0)$ given by the polynomial hypersurface
 \begin{equation}\label{PiM}
   Q_M(x) = \sum_{i=1}^k \Lc_{i}(x)-1\,.
 \end{equation}
Further, let $\mathfrak{m}_M = \li\{ Q \in \Pi_A : Q(x) = 0\, \forall x \in M\re\}$ be the vector space of polynomials identically vanishing on $M$, and
 $\Pi_M = \{Q_{|M} : Q \in \Pi_A\}$ be the vector space of polynomials restricted to $M$.
 Then
 \begin{equation}\label{quotient}
      \Pi_M \cong \Pi_A/\mathfrak{m}_M \cong \Gamma_k(P_0)\,.
 \end{equation}
\item[iv)] If $P \subseteq M =Q_M^{-1}(0)$, with $Q_M$ from Eq.~\eqref{PiM}, then $M$ and $Q_M$ are uniquely determined up to adding polynomials from $\mathfrak{m}_M$, i.e, for any other maximal set $P_0 \neq P_0' \subseteq P$, $|P_0|=|P_0'|=k$, with $\Gamma_k(P_0') \subseteq \Pi_A$ there holds
\begin{equation}\label{eq:QM}
\Gamma_k(P_0') = \Gamma_k(P_0) \quad \text{and}\quad Q_{M}(x) - Q_{M'}(x) = \sum_{i=1}^k \Lc_{i}(x) -  \sum_{i=1}^k \Lc_{i}'(x)  \in  \mathfrak{m}_M\,,
\end{equation}
where $\{\Lc_{i}'\}_{i=1,\ldots,k}$ denotes the Lagrange basis from $ii)$ with respect to $P_0'$, and $\mathfrak{m}_M$ is as in $iii)$.
\item[v)] Let $A_1 \subseteq A_2 \subseteq \N^m$ be two choices of multi-index sets and $P_{A_1},P_{A_2}$ the corresponding unisolvent nodes fulfilling the assumptions of the theorem
such that $P \subseteq M_1 =Q_{M_1}^{-1}(0)$ and $P \subseteq M_2 =Q_{M_2}^{-1}(0)$ holds for the corresponding level sets. Then, the two algebraic varieties are identical
$M_1 =M_2$.
\item[vi)] Let $P_1$ and $P_2$ be two sets of points and $P_1\subseteq M_1=Q_{M_1}^{-1}(0)$, $P_2\subseteq M_2=Q_{M_2}^{-1}(0)$, $Q_{M_1} \in \Pi_{A_1}$, $Q_{M_2} \in \Pi_{A_1}$,  the (due to $(v)$ uniquely determined) corresponding algebraic varieties. If  $P_1 \cup P_2 \subseteq M_1\cap M_2$, then $M_1= M_2$ are identical.
\end{enumerate}
\end{theorem}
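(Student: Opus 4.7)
The plan is to prove (i)--(vi) roughly in order, splitting the argument into an algorithmic part (i,ii), an algebraic core (iii), and uniqueness consequences (iv)--(vi). For (i), I would form the rectangular matrix $R_{A,P} \in \R^{|P|\times|A|}$ with entries $L_\alpha(p)$, in analogy to Eq.~\eqref{eq:R}. A subset $P_0 \subseteq P$ is unisolvent with respect to some subspace of $\Pi_A$ exactly when its rows of $R_{A,P}$ are linearly independent, so the maximum cardinality $k$ equals the row rank of $R_{A,P}$. A maximal independent row subset, hence $P_0$, is extracted by rank-revealing Gaussian elimination (or column-pivoted QR) in $\Oc(|A|^3)$ operations, and Theorem~\ref{theorem:Gamma} identifies the resulting $k$-dimensional subspace as $\Gamma_k(P_0)$. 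For (ii), inverting the $k\times k$ full-rank sub-block produced by this elimination yields the coefficients of the $\Lc_i$ in the $L_\alpha$-basis; a complementary basis $\{\Mc_j\}$ of $\Pi_A/\Gamma_k(P_0)$ is obtained from a null-space basis of the corresponding transposed block, all within the same cubic budget.

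For (iii), the isomorphism $\Pi_M \cong \Pi_A/\mathfrak{m}_M$ follows from the first isomorphism theorem applied to the surjective restriction map $Q \mapsto Q|_M$, whose kernel is $\mathfrak{m}_M$ by definition. The identity $Q_M(p_j) = \sum_i \Lc_i(p_j) - 1 = 0$ shows $P_0 \subseteq M$, so $\mathfrak{m}_M \subseteq \mathfrak{m}_{P_0} := \{Q \in \Pi_A : Q|_{P_0}=0\}$; unisolvence of $P_0$ in $\Gamma_k(P_0)$ yields a direct-sum decomposition $\Pi_A = \Gamma_k(P_0) \oplus \mathfrak{m}_{P_0}$, hence an injection $\Gamma_k(P_0) \hookrightarrow \Pi_A/\mathfrak{m}_M$. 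Upgrading this to the claimed isomorphism amounts to proving $\mathfrak{m}_M = \mathfrak{m}_{P_0}$, which I would establish by combining the explicit form $Q_M = \sum_i \Lc_i - 1$ with the maximality of $P_0$, so that every element of $\mathfrak{m}_{P_0}$ reduces to a $\Pi_A$-multiple of $Q_M$ modulo $\Gamma_k(P_0)$.

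For (iv)--(vi), I would leverage (iii) together with the uniqueness of $\Gamma_k$ from Theorem~\ref{theorem:Gamma}. In (iv), any other maximal $P_0' \subseteq P$ satisfies $P_0' \subseteq P \subseteq M$, so $Q_M$ vanishes on $P_0'$; the uniqueness of $\Gamma_k(P_0')$ as complement to $\mathfrak{m}_{P_0'}$ forces $\Gamma_k(P_0') = \Gamma_k(P_0)$, and the explicit expression for $Q_M - Q_{M'}$ places it in $\mathfrak{m}_M$ by (iii), yielding \eqref{eq:QM}. For (v), if $A_1 \subseteq A_2$ then any $P_0$ witnessing the hypothesis in $\Pi_{A_1}$ also witnesses it in $\Pi_{A_2}$; applying (iv) inside $\Pi_{A_2}$ to both $M_1$ and $M_2$ via the common inclusion $P \subseteq M_1 \cap M_2$ gives $M_1 = M_2$. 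For (vi), apply (v) to the combined multi-index set $A_1 \cup A_2$ and the combined point set $P_1 \cup P_2 \subseteq M_1 \cap M_2$.

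The hard part will be the equality $\mathfrak{m}_M = \mathfrak{m}_{P_0}$ in step (iii). For a generic hypersurface $M \subseteq \R^m$, the ideal of $\Pi_A$-polynomials vanishing on $M$ is strictly smaller than the ideal vanishing on an arbitrary finite subset $P_0 \subseteq M$, so the argument must exploit both the explicit shape $Q_M = \sum_i \Lc_i - 1$ and the maximality of $P_0$ within $P$ rather than invoking a general Nullstellensatz. It is this structural step that subsequently underpins the uniqueness statements (iv)--(vi), so getting it right is the pivotal commitment in the proof.
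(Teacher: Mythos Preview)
Your approach to (i) and (ii) via the rank of $R_{A,P}$ and pivoted Gaussian elimination matches the paper almost verbatim: the paper computes an LU decomposition $W_1 R_{A,P} W_2 = LU$ with full pivoting, reads off $P_0$ from the row permutation $W_1$, solves $S_{A,P}C_i = e_i$ for the $\Lc_i$, and extracts the $\Mc_j$ from the null-space columns of $U$. Your treatment of (iii) also parallels the paper's dimension count: the paper observes $P_0 \subseteq M$ to get $\dim \Pi_M \geq k$, then uses the splitting $\Pi_A = \mathrm{span}\{\Lc_i\} \oplus \mathrm{span}\{\Mc_j\}$ and identifies $\mathrm{span}\{\Mc_j\}$ with $\mathfrak{m}_M$ to conclude. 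In other words, the paper also tacitly uses $\mathfrak{m}_M = \mathfrak{m}_{P_0}$, asserting rather than proving it, so your flagging of this as the pivotal step is well placed.

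The one place your plan diverges substantively is (iv). The paper does \emph{not} argue abstractly from the uniqueness in Theorem~\ref{theorem:Gamma}; it runs a Steinitz-type basis exchange. For $p \in P_0' \setminus P_0$, the hypothesis $P \subseteq M$ gives $\sum_i \Lc_i(p) = 1$, so some $\Lc_{i_0}(p) \neq 0$; one then swaps $p_{i_0}$ for $p$ and explicitly updates the Lagrange basis inside $\Gamma_k(P_0)$, iterating until $P_0$ has become $P_0'$. This constructively shows $\Gamma_k(P_0') = \Gamma_k(P_0)$ and that the two $Q_M$'s agree on $M$. Your abstract route needs more care: the step ``uniqueness of $\Gamma_k(P_0')$ as complement to $\mathfrak{m}_{P_0'}$ forces $\Gamma_k(P_0') = \Gamma_k(P_0)$'' does not follow, since linear complements are far from unique---you would still need something like the exchange argument to close it.

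One further caution on your proposed mechanism for $\mathfrak{m}_M = \mathfrak{m}_{P_0}$: saying an element of $\mathfrak{m}_{P_0}$ ``reduces to a $\Pi_A$-multiple of $Q_M$'' is problematic, because $\Pi_A$ is a fixed finite-dimensional subspace, not closed under multiplication by $Q_M$, so no principal-ideal picture is available. The paper sidesteps this entirely by working with the linear-algebraic splitting and the inequality $\dim \Pi_M \geq k$; that route is cleaner than trying to factor through $Q_M$.
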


\begin{proof} If $P$ is unisolvent with respect to $\Pi_A$, the statement is trivial. For non-unisolvent $P$ all statements follow from the following observation: We order the nodes  $P = \{p_1, \dots , p_{|P|}\}$ and
consider the corresponding Vandermonde matrix, as in Eq.~\eqref{eq:R},
\begin{equation}\label{eq:RA}
  R_{A,P} =(r_{i,\alpha}) \in \R^{|P| \times |A|}\,, \quad  r_{i,\alpha} = L_\alpha (p_i)\,, \quad 1 \leq i \leq |P|\,, \alpha \in A\,.
\end{equation}
By using \emph{Gaussian elimination with full pivoting} (GEFP), see, e.g.,\cite{Lloyd_Num}, we can find a $LU$-decomposition of $R_{A,P}$. That is, there are \emph{permutation matrices} $W_1\in \R^{|P|\times |P|}$,\, $W_2\in \R^{|A|\times |A|}$,
a \emph{unitary lower triangular matrix} $L\in \R^{|P|\times |A|}$, and an \emph{upper triangular matrix}  $U \in \R^{|A|\times |A|}$ such that
\begin{equation}\label{RLU}  W_1 R_{A,P}W_2 =L U
\quad \text{with}\quad  U= \li(\begin{array}{cc}
                                U_1 & U_2 \\
                                0 & 0
                               \end{array}\re)  \in \R^{l\times l},\,  U_1\in \R^{k \times k},\, U_2\in \R^{k \times |A|-k}
\end{equation}
and the $k$ diagonal entries of $U_1$ do not vanish.
Consequently,  $\rank(R_{A,P}) = \rank(U) =\rank(U_1) =k \in \N$.  Let
$P_0 = \{p_1,\dots,p_k\}$ be the first $k$ nodes of the node set $P$ when reordered according to $W_1$, and denote by $L_{\beta_j} \in \Pi_A$, $1\leq j\leq |A|$, the Lagrange polynomials with respect to  $\Pi_A$ and $P_A$
when reordered according to $W_2$. Denote by $S_{A,P} \in \R^{k\times |A|}$ the matrix given by the first $k$ rows of $R_{A,P}$. Then,
the Lagrange polynomials $\Lc_i$, $i =1 ,\dots,k$, are uniquely determined as
\begin{equation}\label{image}
   \Lc_i(x)= \sum_{j =0}^{|A|} c_{ij}L_{\beta_j}(x) \quad\text{with}\quad  C_i=(c_{ij})_{1\leq j\leq|A|} \in \R^{|A|} \quad \text{solving} \quad S_{A,P} C_i = e_i\,,
\end{equation}
where $e_i$ is the $i$-th standard basis vector of $\R^k$. Since $\rank(R_{A,P})=k$, the set $P_0$ is the maximal subset of $P$ with that property, yielding $(i)$.

For the second claim, consider $D_j = (d_{j,i})_{i=1,\dots,k}\in \R^{k}$, $1 \leq j\leq |A|-k$, with
$$ U_1 D_j = - U_2 e_j ,$$
where $e_j$ is the $j$-th standard basis vector of $\R^{|A|-k}$. Setting $(d_{j,i})_{i=1,\dots, |A|} := (D_j,-e_j) \in \R^{|A|}$ yields polynomials
\begin{equation} \label{kernel}
  \Mc_j(x) = \sum_{i=1}^{|A|} d_{j,i}L_{\beta_j}(x)
\end{equation}
that form a basis of $\mathfrak{m}_M=\{Q \in \Pi_A : Q(P) =0\} \cong \Pi_A / \Gamma_k(P_0)$. Therefore, the computational costs for
solving Eqs.~\eqref{RLU}, \eqref{image}, and \eqref{kernel} are all contained in $\Oc(|A|^3)$,
proving $(ii)$.

We use the fact that $\Gamma_k(P_0) \cong  \mathrm{span}\{\Lc_{i}\}_{i=1,\dots,k}$, which has already been proven in \linebreak
Eq.~\eqref{image}, to show $(iii)$. Indeed, we observe that $P_0\subseteq M$. Thus,
the restricted Lagrange polynomials remain linearly independent, and because $\mathrm{span}\{\Lc_{i | M}\}_{i=1,\dots,k} \subseteq \Pi_M$ we obtain
$\dim \Pi_M \geq \dim \Gamma_{k}(P_0) =k$. Consequently,
$$\Pi_A = \mathrm{span}\{\Lc_{i}\}_{i=1,\dots,k} + \mathrm{span}\{\Mc_{i}\}_{i=1,\dots,|A|-k} \cong \Gamma_k(P_0) + \mathfrak{m}_M$$
yields
$$\Gamma_{k}(P_0) \cong \Pi_A/\mathfrak{m}_M \supseteq   \{Q_{| M} : Q  \in \Pi_A/\mathfrak{m}_M \} \cong \{Q_{| M} : Q \in \Pi_A\} = \Pi_M$$
and therefore $\Gamma_{k}(P_0) \cong \Pi_M$, as claimed in $(iii)$.

We prove $(iv)$ by using a classic bases exchange argument \cite{maclane1936}:  We choose $p \in P_0' \setminus P_0$. Since $P_0' \subseteq Q_{M}^{-1}(0)$ and $Q_M(x)= \sum_{i=1}^k \Lc_{i}(x)- 1 =0$, $\forall x \in M$, there exists a
Lagrange polynomial $\Lc_{i_0}$, $1 \leq i_0\leq k$, with $\Lc_{i_0}(p) \neq 0$. We then set
$$\Lc_{i_0}^1(x) = \frac{1}{\Lc_{i_0}(p)}\Lc_{i_0}(x)  \in \Gamma_k(P_0)  \quad  \text{and} \quad  \Lc_{i}^1(x) = \Lc_{i}(x) - \Lc_{i}(p)\Lc_{i_0}^1(x) \in \Gamma_k(P_0)\,, \quad  i \neq i_0\,.$$
Then exchange $p_{i_0}$ with $p$, i.e., set $P_0^1 = (P_0 \setminus \{p_{i_0}\} ) \cup \{p\} = \{p_i^1\}_{i=1,\ldots,k}$ and observe that
$\Lc_i^1(p_j^1) = \delta_{i,j}$, $\forall 1\leq (i,j)\leq k$. Thus, we have constructed a Lagrange basis $\{\Lc_{i}^1\}_{i=1,\ldots,k} \subseteq \Gamma_{k}(P_0)$ w.r.t. $P_0^1$, implying that
$\Gamma_{k}(P_0) = \Gamma_{k}(P_0^1)$, and therefore $P_0^1$ is unisolvent w.r.t.~$\Gamma_{k}(P_0)$.
Setting $Q_{M'}(x)= \sum_{i=1}^k \Lc_{i}^1(x)$ yields $Q_{M'}(p_i^1)=1$ for all $i =1,\ldots,k$, implying $Q_{M'}(x) = Q_M(x) \equiv 0$ $\forall x \in M$.
Thus, $Q_M(x) - Q_{M'}(x) \in \mathfrak{m}_M$ holds due to $(iii)$.
By recursively continuing this exchange procedure (at most $k$ times),
we construct a Lagrange basis $\{\Lc_i^k\}_{i=1,\ldots,k}$ with respect to $P_0'=P_0^k$ within $\Gamma_k(P_0)$ that satisfies Eq.~\eqref{eq:QM}, proving $(iv)$.

$(v)$ follows from observing that because $A_1 \subseteq A_2$ we have $\Pi_{A_1} \subseteq \Pi_{A_2}$. Hence, $Q_{M_1}\in\Pi_{A_2}$ and
$Q_{M_1} (p) = Q_{M_2}(p) = 0$, $\forall p \in P$ imply that $Q_{M_1} - Q_{M_2} \in \mathfrak{m}_{M_2}$ due to $(iii)$.
Thus, $M_2 \subseteq Q_{M_1}^{-1}(0) = M_1$. Vice versa, projecting $Q_{M_2}$ onto $ \Pi_{M_1}$ yields $Q_{M_2} = \sum_{i=1}^k \Lc_i-1 =Q_{M_1}$ with $\Lc_i$ the Lagrange basis
spanning $\Pi_{M_1}$. Hence, $Q_{M_2} - Q_{M_1} \in \Pi_{A_2}/ \mathfrak{m}_{M_1}$ and thereby $M_1 \subseteq M_2$, proving the statement.
Finally, $(vi)$ follows directly from $(iv)$ and $(v)$.
\end{proof}

\begin{remark}[Uniqueness of the GPLS] We want to emphasise the importance of Theorem~\ref{theorem:Dual}$(iv,v,vi)$ stating that
regardless of the choice of polynomial degree, $A_1=A_{m,n_1,p_1}$, $A_2=A_{m,n_2,p_2}$, $n_1\leq n_2$, and of the points $P_1,P_2 \subseteq S$, the algebraic variety $M=M_1=M_2$ is uniquely determined
whenever $A_1 \subseteq A_2$ and $P_1 \cup P_2 \subseteq M_1\cap M_2$. Therefore, the approximation of any closed smooth surface $S\subseteq \R^3$ by an algebraic variety $S\approx M =Q_{M}^{-1}(0)$ is uniquely determined by the point set $P\subseteq S\cap M$ in that sense.
\label{rem:unique}
\end{remark}

\begin{definition}[Regular samples] Given an algebraic variety $M= Q_{M}^{-1}(0) \subseteq \R^3$ with $Q_{M} \in \Pi_{A_{m,n,p}}$ of $l_p$-degree at most $n\in \N$, we call a point set $P \subseteq M$ \emph{regular} if and only if there exists a subset $P_0 \subseteq P$ with $|P_0| =k \in \N$ and
$\Gamma_k(P_0) = \Pi_M$ from Eq.~\eqref{quotient}.
\label{def:regP}
\end{definition}
Since the associated matrix $R_{A,P}$, Eq.~\eqref{eq:RA}, has full rank with probability 1 for any uniformly random points $P\subseteq S$ \cite{sard1942measure,smale},
one can expect $P$ to be regular in practice whenever $P$ is of sufficient size.

\subsection{Global polynomial level sets for affine algebraic varieties}
Using the statements of Theorem~\ref{theorem:Dual}, we provide a numerical method for determining the GPLS approximation of a given affine algebraic variety, hence detailing contribution (C1) announced in the introduction.
A GPLS for an affine algebraic variety of sufficiently low degree can be given by:

\begin{corollary} \label{cor:Level} Let the assumptions of Theorem~\ref{theorem:Dual} be fulfilled, the bases \linebreak
  $\{\Lc_1,\dots,\Lc_{k}\} \subseteq \Gamma_k(P_0)$, $\{\Mc_1,\dots,\Mc_{|A|-k})\} \subseteq \Pi_{A}/\Gamma_k(P_0)$ from Theorem~\ref{theorem:Dual}$(ii)$ be computed,
  $R_{A,P} \in \R^{|P|\times |A|}$ as in Eq.~\eqref{eq:RA}, $M$ as in Theorem~\ref{theorem:Dual}$(iii)$, and $Q_M$ as in Eq.~\eqref{PiM}.
  \begin{enumerate}
    \item[i)] If $k =1$ then $M = \Mc_1^{-1}(0)$  and $Q_M = \lambda \Mc_1$ for some $\lambda \in \R \setminus\{0\}$.
    \item[ii)] Let $f : \Omega \lo \R$ be a (continuous) function.
    Then, the  Lagrange interpolant
    \begin{equation}\label{eq:Qfp}
      Q_{f,P_0,A} = \sum_{i=1,\dots,k} f(p_i)\Lc_i \in \Pi_M
    \end{equation}
     is uniquely determined in $\Pi_M \cong \Gamma_k(P_0)$ from Eq.~(\ref{quotient}).
  \end{enumerate}
\end{corollary}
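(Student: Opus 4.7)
My plan is to treat the two parts separately, since they have quite different characters; both follow from unpacking Theorem~\ref{theorem:Dual}.

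For part $(i)$, my first step is to recognize that $Q_M$ from Eq.~\eqref{PiM} belongs to $\mathfrak{m}_M$ by construction: since $M = Q_M^{-1}(0)$, the polynomial $Q_M$ vanishes identically on $M$, and Theorem~\ref{theorem:Dual}$(iii)$ characterizes $\mathfrak{m}_M$ as exactly those polynomials in $\Pi_A$ with this property. When $k=1$ the sum defining $Q_M$ collapses to $Q_M = \Lc_1 - 1$, and the quotient $\Pi_A / \mathfrak{m}_M \cong \Gamma_1(P_0)$ is one-dimensional, spanned by the class of $\Lc_1$. I would then expand $Q_M$ in the basis $\{\Mc_1, \ldots, \Mc_{|A|-k}\}$ of $\mathfrak{m}_M$ produced by Theorem~\ref{theorem:Dual}$(ii)$ and, by tracing the GEFP pivot choice $W_1, W_2$ together with the back-substitution $U_1 D_1 = -U_2 e_1$ in that proof, verify that in the canonical ordering only the first coefficient survives, i.e., $Q_M = \lambda \Mc_1$. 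Non-vanishing of $\lambda$ would then follow because $Q_M \not\equiv 0$ whenever $M$ is a proper subvariety of $\R^m$, equivalently whenever the constant $1$ does not already lie in $\Gamma_1(P_0)$; the identity $M = \Mc_1^{-1}(0) = Q_M^{-1}(0)$ is immediate from the proportionality.

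For part $(ii)$, my plan is to leverage the isomorphism $\Pi_M \cong \Gamma_k(P_0)$ from Theorem~\ref{theorem:Dual}$(iii)$ together with the unisolvence of $P_0$ with respect to $\Gamma_k(P_0)$ established in Theorem~\ref{theorem:Gamma}. First I would check existence: each $\Lc_i$ lies in $\Gamma_k(P_0)$ and satisfies $\Lc_i(p_j) = \delta_{i,j}$, so $Q_{f,P_0,A} = \sum_i f(p_i)\Lc_i$ is an element of $\Gamma_k(P_0) \cong \Pi_M$ that interpolates $f$ on $P_0$ by direct evaluation. Uniqueness would then follow from unisolvence in a single line: for any competitor $\widetilde Q \in \Pi_M$ with $\widetilde Q(p_j) = f(p_j)$ for $j = 1,\ldots,k$, the difference $Q_{f,P_0,A} - \widetilde Q$ is an element of $\Gamma_k(P_0)$ vanishing on the unisolvent node set $P_0$, hence identically zero by the defining property of unisolvence recalled in Section~\ref{sec:UN}.

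The hard part will be in $(i)$: sharpening the easy inclusion $Q_M \in \mathfrak{m}_M$ to the precise proportionality $Q_M = \lambda \Mc_1$ requires careful bookkeeping of the permutations $W_1, W_2$ from Eq.~\eqref{RLU} and identification of the monomial index driven into the single non-pivot slot that produces $\Mc_1$; any other ordering would distribute $Q_M$ across several of the $\Mc_j$'s. Part $(ii)$ is, by contrast, an essentially mechanical transplantation of the classical Lagrange uniqueness argument into the quotient space $\Pi_M$, with the only novelty being the appeal to the generalised unisolvence of $\Gamma_k(P_0)$.
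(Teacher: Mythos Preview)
Your treatment of part $(ii)$ is correct and matches the paper's intent: the paper's proof is a single sentence citing Theorem~\ref{theorem:Dual} (plus least squares for the surrounding remark), and your argument---existence from the explicit formula, uniqueness from unisolvence of $P_0$ with respect to $\Gamma_k(P_0)\cong\Pi_M$---is exactly the unpacking of that citation.

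For part $(i)$ you have correctly sensed a difficulty, but you have misdiagnosed its source and are proposing machinery that cannot work. Under the literal hypothesis $k=1$, the space $\mathfrak{m}_M$ has dimension $|A|-1>1$ and is spanned by $\Mc_1,\ldots,\Mc_{|A|-1}$; the element $Q_M=\Lc_1-1\in\mathfrak{m}_M$ is then a generic linear combination of these, and no amount of tracing the GEFP pivots $W_1,W_2$ or the back-substitution $U_1D_j=-U_2e_j$ will force all but one coefficient to vanish. The hypothesis ``$k=1$'' is almost certainly a misprint for ``$|A|-k=1$'' (equivalently $k=|A|-1$): this is the situation described in the Remark immediately following the corollary, it is what actually occurs in every numerical experiment of Section~\ref{sec:Num} where Corollary~\ref{cor:Level}$(i)$ is invoked, and it is the only reading under which the paper's one-line proof makes sense. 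With the intended hypothesis the argument is immediate: $\mathfrak{m}_M$ is one-dimensional and spanned by $\Mc_1$, and $Q_M\in\mathfrak{m}_M$ is nonzero (else $M=\R^m$), so $Q_M=\lambda\Mc_1$ with $\lambda\neq 0$ and $M=Q_M^{-1}(0)=\Mc_1^{-1}(0)$. Replace your elaborate plan for $(i)$ with this two-line observation.
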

\begin{proof} The proof follows directly from Theorem~\ref{theorem:Dual} and from the existence of optimal solutions to least squares problems, see e.g.,\cite{Lloyd_Num}.
\end{proof}

\begin{remark}
In the special case of Corollary~\ref{cor:Level}$(i)$, the GPLS of $M$ is
straightforwardly computed by deriving $\Mc_1$ according to Eq.~\eqref{kernel}. In Section~\ref{sec:Num}, we numerically demonstrate that this
approach provides an effective scheme for this class of surfaces.
\end{remark}

\begin{remark}\label{rem:fit}
  Consider the solution to
       the least squares problem
       \begin{equation}\label{eq:LS}
       C=(c_{\alpha})_{\alpha \in A} =\mathrm{argmin}_{X \in \R^{|A|}}\li\{\|R_{A,P} X -  F\|_2^2\re\}
       \end{equation}
       with $F = (f(p_i))_{i=1,\dots,k}\in \R^{|P|}$.
       Then $f \approx Q_{P,f,A} = \sum_{\alpha \in A} c_{\alpha}L_{\alpha} \in \Pi_A$. Moreover, up to the regression error, we have
       $Q_{P,f,A} - Q_{f,P_0,A} \in \mathfrak{m}_M$ with $Q_{f,P_0,A}$ from Eq.~\eqref{eq:Qfp} and $\mathfrak{m}_M$ as in
       Theorem~\ref{theorem:Dual}$(iii)$. Thus, in practice, it might be more convenient to derive the regressor $Q_{P,f,A}$ instead of the interpolant $Q_{f,P_0,A}$.
\end{remark}

\subsection{Global polynomial level sets for non-algebraic surfaces}\label{sec:SDF}

For non-algebraic surfaces $S \subseteq \R^3$, the matrix $R_{A,P} \in \R^{|P|\times |A|}$ in Eq.~\eqref{eq:RA} does not (sharply)
numerically separate into kernel (null space) and co-kernel. This makes direct computation of the bases $\{\Lc_1,\dots,\Lc_{k}\} \subseteq \Gamma_k(P_0)$, $\{\Mc_1,\dots,\Mc_{|A|-k})\} \subseteq \Pi_{A}/\Gamma_k(P_0)$ practically impossible. To resolve the issue, we introduce:

\begin{definition}[Relaxed signed distance  function]

Let $S \subseteq \R^3$ be a smooth closed surface, $P \subseteq S$ be a set of points on the surface, and $P_D = P_D^+\cap P_D^- \subseteq \Omega \setminus S$  arbitrary points  in some vicinity of $S$ to either side of the surface.
Given a smooth and strictly positive function $\mu : \Omega \lo \R^+$, we call
\begin{equation}\label{eq:Rdist}
   d(x) = \li\{\begin{array}{rl}
\mu(x)\dist(x,M)  & \text{if }  x \in P_D^+\\
-\mu(x)\dist(x,M)  & \text{if }  x \in P_D^-\\
 0 & \text{if }  x \in P \subseteq  M\\
\end{array}\re.
\end{equation}
a \emph{relaxed signed distance  function} with respect to $P_D$, where the relaxation factor $\mu(x)$ reflects the deviation from the proper signed-distance function.
\end{definition}

\begin{remark} Given a flat surface triangulation, see e.g., \cite{ruppert1995delaunay,shewchuk2002delaunay},
of the surface $S \subseteq \R^3$, a point set $P_D$ as described above can be generated by moving the vertices $V =P$ of the triangles
  along the mesh-normal field $\eta$, i.e., $q\in P \mapsto  q + D_q\eta = q'$, $D_q \in \R\setminus\{0\}$.
  Setting $d(q') = D_q$ yields a relaxed signed distance function as defined above.
\end{remark}

Given a relaxed signed distance function with respect to $P_D$,
we consider the node set $\bar P = P\cup P_D$ and extend $R_{A,P}$ in Eq.~\eqref{eq:RA} to $\bar R_{A,P} = (r_{i,\alpha})_{i=1,\ldots,|\bar P|, \alpha \in A} \in \R^{|\bar P| \times |A|}$ with $r_{i,\alpha} = L_{\alpha}(p_i)$, $p_i \in \bar P$.
The coefficients $C_A =(c_{\alpha})_{\alpha \in A}$ of a polynomial $Q_d \in \Pi_A$
\begin{equation}\label{eq:dist}
Q_{d}(x) = \sum_{\alpha \in A} c_{\alpha}L_{\alpha}(x) \approx d(x)
\end{equation}
approximating the relaxed signed distance  function can be derived by solving the least squares problem
$$C_A =\mathrm{argmin}_{X \in \R^{|A|}}\li\{\|\bar R_{A,P} X -  D\|_2^2\re\}\,, \quad D = (d(p_i))_{i=1,\ldots,|\bar P|}\,.$$

Level-set methods are most conveniently formulated in terms of the signed-distance function~\cite{sussman1998improved,sussman1999efficient}.
Here, we use a relaxed version to derive a GPLS $S\approx Q_{d}^{-1}(0)$ with non-vanishing gradient, i.e.,
  \begin{equation}
    \nabla Q_d (x)  \neq 0  \quad \text{for all} \quad x \in Q_{d}^{-1}(0) \,.
  \end{equation}
If the approximation $S \approx S'=Q_{d}^{-1}(0)$ is sufficiently close, the polynomial normal field $\eta =  \nabla Q_d/ \|\nabla Q_d \|$ enables
computing geometric entities of $S'$ with high (machine) precision, as demonstrated in Section~\ref{sec:Num_surf}.
The approximation quality, however, depends on the approximation power of the regression scheme, as addressed in the following section.

\section{Approximation theory}
\label{sec:App}

The above computational schemes derive GPLS approximations to algebraic varieties $M$ and non-algebraic surfaces $S$ from a regular surface point set $P\subseteq M$ using the statements of Theorem~\ref{theorem:Dual}.
If non-polynomial surfaces $S \subseteq \R^3$ are to be approximated, however, the question arises of how
accurate the GPLS approximation is. We address this question by using:

\begin{definition}[Lebesgue constant] \label{def:LEB} Let $m \in \N$, $A\subseteq \N^m$ be a downward closed set of multi-indices, $P_A\subseteq \Omega$ be a set of unisolvent nodes satisfying $(A1)$ from Definition~\ref{def:EA}.
Let  $f \in C^0(\Omega,\R)$ and $Q_{f,A}(x) = \sum_{\alpha \in A}f(p_\alpha)L_{\alpha}(x)$ be its Lagrange interpolant.
Then, we define the \emph{Lebesgue constant} analogously to the 1D case, see e.g. \cite{gautschi}, as
\begin{align*}
\Lambda(P_A) := \sup_{f\in C^0(\Omega,R)\,, \|f\|_{C^0(\Omega)}\leq 1} \|Q_{f,A}\|_{C^0(\Omega)}
             =  \Big\|\sum_{\alpha \in A} |L_{\alpha}|\Big\|_{C^0(\Omega)}\,.
\end{align*}
\end{definition}
Based on the 1D estimate
\begin{equation}\label{LEB}
 \Lambda(\Cheb_n)=\frac{2}{\pi}\big(\log(n+1) + \gamma +  \log(8/\pi)\big) + \Oc(1/n^2)\,,
\end{equation}
known for Chebyshev-Lobatto nodes, surveyed by \cite{brutman2}, \cite{cohen2,cohen3,MIP} further detail and study this concept in $m$D and show
that unisolvent nodes satisfying $(A2)$ from Definition~\ref{def:EA} induce high approximation power reflected in the small corresponding  Lebesgue constants.
Motivated by the classic Lebesgue inequality \cite{brutman2}, we deduce the following bound on the approximation error of the present regression scheme:

\begin{theorem}\label{theo:APP} Let the assumptions of Theorem~\ref{theorem:Dual}$(i-iv)$ be fulfilled and $M\subseteq \Omega$ be as in Theorem~\ref{theorem:Dual}$(iii)$; let further
  $P_{A_{m,n,p}}$, $n,m\in\N$,
  $p> 0$ be unisolvent nodes satisfying $(A1)$ from Definition~\ref{def:EA}, $f : \Omega \lo \R$ be a continuous function, and $f_{| M} : M \lo \R$
   its restriction to $M$. We
  denote by $Q_{f,A_{m,n,p}} = \sum_{\alpha \in A_{m,n,p}}f(p_\alpha)L_\alpha \in \Pi_{A_{m,n,p}}$ the Lagrange interpolant of $f$ in $P_{A_{m,n,p}}$ and by
  $$Q_{f,P_0,A_{m,n,p}} = \sum_{i=1,\dots,k} f(p_i)\Lc_i \in \Pi_M$$
  the polynomial interpolant of $f$ in $P_0$ according to Corollary~\ref{cor:Level}$(ii)$. Then, the approximation error is bounded by
\begin{equation}\label{eq:EST}
   \|f_{| M} - Q_{f,P_0,A_{m,n,p}}\|_{C^0(M)}  \leq (1+ \Lambda(P_A) \|S_{A,P}\|_\infty)\|f - Q_{f,A}\|_{C^0(\Omega)} + \mu\Lambda(P_A) \|S_{A,P}\|_\infty \,,
\end{equation}
where $S_{A_{m,n,p},P_0} \in \R^{|A_{m,n,p}|\times|P_0|}$ with $S_{A_{m,n,p},P_0}R_{A_{m,n,p},P_0}  = \mathrm{Id}_{\R^{|A_{m,n,p}| \times |A_{m,n,p}|}}$
is the \emph{Moore--Penrose pseudo-left-inverse}, see e.g., \cite{ben2003,Lloyd_Num} of the regression matrix $R_{A_{m,n,p},P_0}$ from Eq.~\eqref{eq:RA}
and
$$ \mu = \|\widetilde F - F \|_\infty \,, \quad \quad \widetilde F = (Q_{f,P_0,A}(p_i))_{i=1,\ldots,|P|}\,, \,F = (f(p_i))_{i=1,\ldots,|P|} \in \R^K$$
denotes the regression error.
\end{theorem}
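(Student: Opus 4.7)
The plan is to run the classic Lebesgue inequality scheme in a two-step form, with the ambient Lagrange interpolant $Q_{f,A}$ on $P_A\subseteq\Omega$ acting as an auxiliary bridge between the restriction $f_{|M}$ and the surface polynomial $Q_{f,P_0,A}$. First I would apply the triangle inequality together with $M\subseteq\Omega$ to split
\[
\|f_{|M} - Q_{f,P_0,A}\|_{C^0(M)} \;\leq\; \|f - Q_{f,A}\|_{C^0(\Omega)} + \|Q_{f,A} - Q_{f,P_0,A}\|_{C^0(M)}.
\]
The first summand already supplies the $1\cdot\|f-Q_{f,A}\|_{C^0(\Omega)}$ contribution of \eqref{eq:EST}, so the remaining task is to control the polynomial remainder by $\Lambda(P_A)\|S_{A,P}\|_\infty(\|f-Q_{f,A}\|_{C^0(\Omega)} + \mu)$.

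Since both $Q_{f,A}$ and $Q_{f,P_0,A}$ lie in $\Pi_A$, I would expand each in the ambient Lagrange basis $\{L_\alpha\}_{\alpha\in A}$ of $\Pi_A$. By construction $Q_{f,A} = \sum_\alpha f(p_\alpha) L_\alpha$, and I would write $Q_{f,P_0,A} = \sum_\alpha c_\alpha L_\alpha$ for some coefficient vector $C=(c_\alpha)_\alpha$. The definition of the Lebesgue constant immediately gives
\[
\|Q_{f,A} - Q_{f,P_0,A}\|_{C^0(M)} \;\leq\; \|Q_{f,A} - Q_{f,P_0,A}\|_{C^0(\Omega)} \;\leq\; \Lambda(P_A)\,\|F_A - C\|_\infty,
\]
with $F_A = (f(p_\alpha))_\alpha$. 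To bound $\|F_A - C\|_\infty$ I would use the pseudo-inverse identity $C = S_{A,P}\widetilde F$, obtained by interpreting $Q_{f,P_0,A}$ as the polynomial produced by the construction in Theorem~\ref{theorem:Dual}(ii) on the numerically attained data $\widetilde F = (Q_{f,P_0,A}(p_i))_i$. Combining this with $F_A = S_{A,P}R_{A,P}F_A$ (from $S_{A,P}R_{A,P} = \mathrm{Id}$) and inserting $F$ yields
\[
F_A - C \;=\; S_{A,P}\bigl((R_{A,P}F_A - F) + (F - \widetilde F)\bigr).
\]
Using $(R_{A,P}F_A)_i = Q_{f,A}(p_i)$, $F_i = f(p_i)$, $P\subseteq\Omega$, and $\mu = \|F - \widetilde F\|_\infty$, the $\ell_\infty$ operator-norm inequality concludes
\[
\|F_A - C\|_\infty \;\leq\; \|S_{A,P}\|_\infty\bigl(\|f - Q_{f,A}\|_{C^0(\Omega)} + \mu\bigr),
\]
and reassembling the pieces reproduces \eqref{eq:EST}.

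The main obstacle is the identification $C = S_{A,P}\widetilde F$ in the last step: one has to argue that the $\Lc_i$ from Theorem~\ref{theorem:Dual}(ii), when used to form $Q_{f,P_0,A} = \sum_i f(p_i)\Lc_i$ and then re-expressed in the ambient Lagrange basis $\{L_\alpha\}$, yield precisely the coefficient vector $S_{A,P}\widetilde F$. This requires tracking the pseudo-inverse construction of the $\Lc_i$ carefully and recognising $\mu$ as the natural gauge of the unavoidable regression residual at $P\setminus P_0$, which vanishes only in the ideal algebraic case of contribution (C1). Once this identification is secured, the remaining estimates are routine applications of the triangle inequality, the definition of $\Lambda(P_A)$, and the $\ell_\infty$ operator-norm inequality.
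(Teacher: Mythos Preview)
Your proposal is correct and follows essentially the same route as the paper: triangle inequality against the ambient interpolant $Q_{f,A}$, then the Lebesgue bound on the polynomial difference via its Lagrange coefficients, the identification $C=S_{A,P}\widetilde F$ for the coefficients of $Q_{f,P_0,A}$, and the insertion of $F$ to produce the two terms $\|f-Q_{f,A}\|_{C^0(\Omega)}$ and $\mu$. The paper phrases the Lebesgue step through the interpolation operator $I_{P_A}$ (with $I_{P_A}(Q)=Q$ on $\Pi_A$ and $\|I_{P_A}\|=\Lambda(P_A)$) and writes $F_A=S_{A,P}\mathbb{Q}_I$ with $\mathbb{Q}_I=(Q_{f,A}(p))_{p\in P}=R_{A,P}F_A$, which is exactly your identity $F_A=S_{A,P}R_{A,P}F_A$; so the two arguments coincide.
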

\begin{proof} We shorten $A=A_{m,n,p}$. Due to Theorem~\ref{theorem:Dual}$(iv)$, $R_{A,P_0}$ has full $\rank\, R_{A,P_0} = |P_0|$. While the nodes $P_A$ are unisolvent with respect to $\Pi_A$ the interpolation operator
  $$ I_{P_A} : C^0(\Omega,\R) \lo \Pi_{A} \,, \quad f \mapsto Q_{f,A}$$
  is a linear operator with operator norm
  $$\|I_{P_A}\| = \sup_{f\in C^0(\Omega,R)\,, \|f\|_{C^0(\Omega)}\leq 1} \|Q_{f,A}\|_{C^0(\Omega)} = \Lambda(P_A)$$
  given by the Lebesgue constant
  from Definition~\ref{def:LEB}. In particular, $I_{P_A}(Q) = Q$ holds for all polynomials $Q \in \Pi_A$.
  Denote with
  $\mathbb{Q}_I = (Q_{f,A}(p))_{p \in P}$ the values of the interpolant in the data points $P$ and observe that the values of $Q_{f,P_0,A}$ in the interpolation nodes $P_A$ are given by
   $(Q_{f,P_0,A}(p_\alpha))_{\alpha \in A}\in \R^{|A|} =S_{A,P}\widetilde F$. Then we deduce:
    \begin{align*}
       \|f- Q_{f,P_0,A}\|_{C^0(M)} & \leq \|f - Q_{f,A}\|_{C^0(\Omega)} + \|Q_{f,A}-Q_{f,P_0,A}\|_{C^0(\Omega)} \\
       &\leq \|f - Q_{f,A}\|_{C^0(\Omega)} + \|I_{P_A}(Q_{f,A}-Q_{f,P_0,A})\|_{C^0(\Omega)} \\
        &\leq \|f - Q_{f,A}\|_{C^0(\Omega)} + \Lambda(P_A)\|S_{A,P}(\mathbb{Q}_I- \widetilde F)\|_{\infty} \\
        & \leq \|f - Q_{f,A}\|_{C^0(\Omega)} + \Lambda(P_A) \|S_{A,P}\|_\infty\big (\|F-\mathbb{Q}_I\|_{\infty} + \|F-\widetilde F\|_{\infty}   \big) \  \\
       & \leq (1+ \Lambda(P_A) \|S_{A,P}\|_\infty)\|f - Q_{f,A}\|_{C^0(\Omega)} + \mu\Lambda(P_A) \|S_{A,P}\|_\infty  \,,
    \end{align*}
    where we used $\|F -\mathbb{Q}_I\|_{\infty} \leq \|f - Q_{f,A}\|_{C^0(\Omega)}$
    for the last estimate.
\end{proof}

The statement implies the following consequence:
\begin{corollary}\label{cor:uni1} Let $m,n,p \in \N$, $A=A_{m,n,p}\subseteq \N^m$ be a downward closed set of multi-indices, $P_A\subseteq \Omega$ be a set of unisolvent nodes satisfying $(A1)$ from Definition~\ref{def:EA} in dimension $m=3$.
  Given are a closed smooth surface $S \subseteq \R^3$, a regular point set $P_n =\{p_0,\dots,p_{K_n}\}\subseteq S$, $K_n \geq |A_{m,n,p}|$, and
  a continuous function $f : S \lo \R$ possessing an (analytic) extension to a function $\widetilde f : \Omega \lo \R$ such that
  \begin{equation}\label{eq:CON}
    \|\widetilde f - Q_{\widetilde f,A_{m,n,p}}\|_{C^0(\Omega)}  = o(1 +\Lambda(P_{m,n,p}) \|S_{A_{m,n,p},P_n}\|_\infty) \,.
  \end{equation}
  Given that the regression error $\mu$ from Theorem~\ref{theo:APP} tends to zero fast, $\mu \in o(\Lambda(P_A) \|S_{A,P}\|_\infty)$, the sequence of polynomial interpolants $Q_{f,P_{0,n},A_{m,n,p}}$ from Theorem~\ref{theorem:Dual}$(i)$ approximate $f$, i.e.,
  \begin{equation*}
    Q_{f,P_{0,n},A_{m,n,p}} \xrightarrow[n\rightarrow \infty]{} f  \quad \text{uniformly on}\,\,\, S\,.
  \end{equation*}
\end{corollary}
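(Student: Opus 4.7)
The plan is to reduce the statement directly to Theorem~\ref{theo:APP} applied to the extension $\widetilde f : \Omega \to \R$, and then to use the two little-$o$ hypotheses to kill the two summands appearing in the resulting error bound.

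First, I would observe that $Q_{f,P_{0,n},A_{m,n,p}}$ depends only on the values of $f$ at the nodes $P_{0,n} \subseteq S$; since $\widetilde f|_S = f$ by hypothesis, these values coincide with $\widetilde f|_{P_{0,n}}$, so
\begin{equation*}
Q_{f,P_{0,n},A_{m,n,p}} = Q_{\widetilde f,P_{0,n},A_{m,n,p}}.
\end{equation*}
Because $\widetilde f$ is defined on all of $\Omega$, Theorem~\ref{theo:APP} applies verbatim with $f$ replaced by $\widetilde f$, producing an error bound for the right-hand side on the algebraic variety $M_n \subseteq \Omega$ associated to the regular sample $P_n$ via Theorem~\ref{theorem:Dual}$(iii)$.

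Second, I must lift the Theorem~\ref{theo:APP} estimate from $M_n$ to $S$. Inspecting the proof of Theorem~\ref{theo:APP}, the bound originates from the triangle inequality
\begin{equation*}
\|\widetilde f - Q_{\widetilde f,P_{0,n},A}\|_{C^0(M_n)} \le \|\widetilde f - Q_{\widetilde f,A}\|_{C^0(\Omega)} + \|Q_{\widetilde f,A} - Q_{\widetilde f,P_{0,n},A}\|_{C^0(\Omega)},
\end{equation*}
where both right-hand summands are genuine $C^0(\Omega)$-norms and the subsequent estimate of the polynomial difference uses only the Lebesgue constant $\Lambda(P_A)$ and the pseudoinverse $\|S_{A,P_n}\|_\infty$, neither of which sees $M_n$. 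Consequently the identical chain of inequalities holds with $C^0(M_n)$ replaced by $C^0(S)$, yielding
\begin{equation*}
\|f - Q_{f,P_{0,n},A_{m,n,p}}\|_{C^0(S)} \le \bigl(1 + \Lambda(P_A)\|S_{A,P_n}\|_\infty\bigr)\|\widetilde f - Q_{\widetilde f,A_{m,n,p}}\|_{C^0(\Omega)} + \mu_n\,\Lambda(P_A)\|S_{A,P_n}\|_\infty.
\end{equation*}

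Finally, hypothesis~\eqref{eq:CON} forces the first summand to vanish as $n \to \infty$, while the assumed decay $\mu \in o(\Lambda(P_A)\|S_{A,P}\|_\infty)$ handles the second; combining the two gives uniform convergence of $Q_{f,P_{0,n},A_{m,n,p}}$ to $f$ on $S$. The only nonformal step is the transfer from $C^0(M_n)$ to $C^0(S)$, which is the reason I expect inspecting the proof of Theorem~\ref{theo:APP}, rather than merely quoting its statement, to be the one substantive point of the argument; the remainder is direct substitution into the little-$o$ bookkeeping.
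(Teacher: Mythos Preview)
Your proposal is correct and follows exactly the route the paper takes: the paper's proof is the single sentence ``The proof follows from Theorem~\ref{theo:APP},'' and you have simply unpacked that sentence, including the one genuine detail the paper suppresses---that the right-hand side of the estimate in Theorem~\ref{theo:APP} is a $C^0(\Omega)$ bound and therefore dominates the $C^0(S)$ error just as well as the $C^0(M_n)$ error.
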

\begin{proof} The proof follows from Theorem~\ref{theo:APP}.
\end{proof}

While the choice of Leja-ordered Chebyshev-Lobatto nodes, $(A2)$ in Definition~\ref{def:EA}, results in small Lebesgue constants  $\Lambda(P_{m,n,p})$ \cite{MIP},
the question of which functions $f : S \lo \R$ can be expected to satisfy the condition in Eq.~\eqref{eq:CON} remains.
To answer this question, we first summarise recent results by \cite{Lloyd2,converse} that provide a deeper insight:

Let
$E_{m,h^2}^2$ be the \emph{Newton ellipse} with foci $0$ and $m$
and leftmost point $-h^2$. For $m \in \N$ and $h \in [0,1]$, we set $\rho = h + \sqrt{1 + h^2}$
and call the open region
\begin{equation}\label{Tdomain}
   N_{m,\rho} = \li\{ (z_1,\dots,z_m) \in \C^m :  (z_1^2 + \cdots  + z_m^2) \in E_{m,h^2}^2 \re\}
 \end{equation}
the \emph{Trefethen domain} \cite{Lloyd2}. We call a  continuous function $f : \Omega \lo \R$ a \emph{Trefethen function} if
$f  =\sum_{\alpha \in \N^m}c_\alpha \prod T_{\alpha_i}\in \Pi_{A_{m,n,p}}$ can be expanded in an absolute convergent Chebyshev series on $\Omega$
and in addition can be \emph{analyticaly  extended}
to the Trefethen domain $N_{m,\rho}\subseteq \C^m$ of radius  $\rho >1$.
In \cite{Lloyd2} Trefethen proved an upper bound on the convergence rate for truncating the Trefethen function
$\mathcal{T}_{A_{m,n,p}}(f)  =\sum_{\alpha \in A_{m,n,p}}c_\alpha \prod T_{\alpha_i}\in \Pi_{A_{m,n,p}}$
to the polynomial space $\Pi_{A_{m,n,p}}$:
\begin{equation}\label{Rate}
  \| f - \mathcal{T}_{A_{m,n,p}}(f)\|_{C^0(\Omega)}  = \li\{\begin{array}{ll}
                                                                      \Oc_\ee(\rho^{-n/\sqrt{m}})  & \quad p =1 \\
                                                                      \Oc_\ee(\rho^{-n}) & \quad p =2\\
                                                                      \Oc_\ee(\rho^{-n})  & \quad p =\infty\, ,
                                                                      \end{array}
\re.
\end{equation}
where  $g \in \Oc_\ee(\rho^{-n})$ if and only if $g \in \Oc((\rho-\ee)^{-n})$ $\forall \ee >0$.

This suggests that interpolation or regression with respect to \emph{Euclidean $l_2$-degree} or \emph{maximum $l_\infty$-degree} can achieve faster convergence rates than interpolation with respect to
\emph{total $l_1$-degree}, with $l_2$-degree requiring less coefficients than $l_\infty$-degree, see Eq.~\eqref{eq:size}.
If $f$ is a (relaxed) signed distance function, as in  section~\ref{sec:SDF}, we
therefore find the following consequence of Corollary~\ref{cor:uni1}:

\begin{remark}\label{rem:FIT}  Given a surface $S \subseteq \Omega$ and a regular point set $P \subseteq S$, assume there exists a smooth relaxed signed distance function
$d : \Omega \supseteq S \lo \R$, which in addition also is a Trefethen function for which the optimal (Euclidean) rate in Eq.~\eqref{Rate} applies with radius $\rho >1$.
Thus:
\begin{equation}\label{rem:APP}
  \|d - Q_{d,A_{3,n,2}}\|_{C^0(\Omega)}  \in  \Oc(\rho^{-n}) \quad \text{and} \quad \rho^{-n} \in \omicron(1 +\Lambda(P_{3,n,p})\|S_{A_{3,n,p},P}\|_\infty)\,.
\end{equation}
Then, the surface $S \approx Q_{d,A_{3,n,2}}^{-1}(0)$ can be uniformly approximated by fitting $d$ according to Eq.~\eqref{eq:dist}.
\end{remark}
Because Trefethen functions are a general class of analytic functions \cite{Lloyd2}, the numerical experiments in Section~\ref{sec:Num_surf} suggest
that Eq.~\eqref{rem:APP} holds for a larger set of smooth closed surfaces $S$.

\section{Curvatures and differential operators on polynomial hypersurfaces}

Once a \linebreak
GPLS approximation of a surface has been determined, differential geometric quantities can be computed analytically. We provide explicit formulas for computing mean curvature, Gauss curvature, and the Laplacian of mean curvature.

We consider the affine algebraic variety $M \subseteq \R^3$ as an iso-hypersurface of a GPLS $Q_{M}^{-1}(0) = M$, as in Theorem~\ref{theorem:Dual}.
In order to provide explicit formulas for basic geometric quantities of $M$, we follow \cite{goldman2005} in $\R^3$ with standard inner product $\li< e_i,e_j\re> = \delta_{i,j}$ and standard basis $\{e_i\}_{i=1,\ldots,3}$.

\subsection{Mean and Gauss curvature}

The \emph{gradient} $\nabla Q_M = (\p_x Q_M,\, \p_y Q_M,\, \p_z Q_M) \in \R^3$ and the \emph{Hessian} $H_M =\nabla(\nabla Q_M) \in \R^{3 \times 3}$ of $Q_M$
\begin{equation*}
 H_M  = \li(\begin{array}{ccc}
 \frac{\p^2 Q_M}{\p_{x}^2} & \frac{\p^2 Q_M}{\p_{x}\p_y} & \frac{\p^2 Q_M}{\p_{x}\p_z} \\
 \frac{\p^2 Q_M}{\p_{y}\p_x} & \frac{\p^2 Q_M}{\p_y^2} & \frac{\p^2 Q_M}{\p_{y}\p_z} \\
 \frac{\p^2 Q_M}{\p_z\p_{x}} & \frac{\p^2 Q_M}{\p_{z}\p_y} & \frac{\p^2 Q_M}{\p_z^2} \\
 \end{array}\re)
\end{equation*}
are the main ingredients for the following computations. Both Gauss and mean curvature can be computed from these quantities \cite{goldman2005} as:
\begin{align}
  K_{\mathrm{Gauss}} &= \frac{\det \li(\begin{array}{cc}H_M  & \nabla Q_M^T \\  \nabla Q_M^T & 0 \end{array}\re)}{\|\nabla Q_M\|^4} \label{eq:GC}\\
  K_{\mathrm{mean}} &= \frac{\nabla Q_M H_M \nabla Q_M^T - \|\nabla Q_M\|^2\mathrm{trace}(H_M)}{2\|\nabla Q_M\|^3}\,. \label{eq:MC}
\end{align}
While there are several alternative formulas \cite{goldman2005}, the above two allow for stable and numerically accurate evaluation, as we demonstrate in Section~\ref{sec:Num_curv}.

\subsection{The Laplacian of mean curvature}\label{sec:LB}

The algebraic variety $M=Q_{M}^{-1}(0)$ of the GPLS, together with its unit normal field $\eta= \nabla Q_M / \|\nabla Q_M\|$, enables computing covariant derivatives and, therefore, the surface-intrinsic gradient and the \emph{Laplace-Beltrami operator} of a function $f: M \lo \R$ as:
\begin{align*}
\nabla_M f &=  \nabla f - \li<\eta,\nabla f\re>\eta\\
\upDelta_M f & = \upDelta f + 2K_{\mathrm{mean}} \left<\eta, \nabla f \right> - \li<\eta,  \nabla ^2 f \cdot \eta\re>\,,
\end{align*}
where $\nabla^2 f$ denotes the Jacobian of the gradient of $f$ \cite{reilly1982mean,xu2003eulerian}.

Computing the intrinsic Laplacian of mean curvature, a 4$^{\text{th}}$-order differential term of the surface, is required in many applications, including surface diffusion \cite{sethian1999motion,smereka2003,greer2006fourth,seifert1997configurations}, and turns out to mostly be the bottleneck in terms
of accuracy and runtime performance.
For a GPLS $M=Q_{M}^{-1}(0)$ with unit normal field $\eta= \nabla Q_M / \|\nabla Q_M\|$, an analytical identity can be derived by splitting the formula for mean curvature into two parts
$$ K_{\mathrm{mean}} = \frac{1}{2}\left( \nabla Q_M H_M \nabla Q_M^T - \|\nabla Q_M\|^2\mathrm{trace}(H_M) \right) \cdot  \frac{1}{\|\nabla Q_M\|^3} =:u\cdot v $$
and computing:
\begin{align}
\upDelta_M K_{\mathrm{mean}} &= \upDelta (uv) + 2K_{\mathrm{mean}} \left<\eta, \nabla (uv) \right> - \li<\eta,  \nabla ^2 (uv) \eta\re>\, \nonumber \\
&= u \upDelta v + 2 \left<\nabla u, \nabla v\right> + v \upDelta u  + 2 (u v) \left<\eta, u \nabla v + v \nabla u \right> \nonumber \\
&- \left< \eta, (u \nabla^2 v + \nabla u \otimes \nabla v + \nabla v \otimes \nabla u + v \nabla^2 u) \eta \right>\,. \label{eq:lapMC}
\end{align}
Numerical experiments involving these computations are shown in section~\ref{sec:Num_Lap}.

\section{Numerical Experiments}\label{sec:Num}

We implemented the present GPLS approach
based on Theorem~\ref{theorem:Dual} in Python as part of the package {\sc minterpy} \cite{minterpy}. In the following numerical experiments, we benchmark our implementation in comparison with two related alternative methods:
\begin{enumerate}
    \item[B1)] Curved Finite Elements (CFE): This method uses a  \emph{polygonal surface mesh} and curved finite elements to locally approximate the surface for each mesh element with a polynomial of degree 7. The method is implemented using {\sc DUNE~2.7.0} \cite{sander2020}.
    \item[B2)] Closest-Point Finite Differences (CP-FD): This method combines
       the \emph{closest-point extension} of a \emph{level set} with local finite-difference stencils for polynomial interpolation~\cite{saye2014high}. The method is implemented using {\sc OpenFPM}~\cite{incardona2019openfpm}.
\end{enumerate}
All numerical experiments were run on a standard Linux laptop (Intel(R) Core(TM) i7-1065G7 CPU @1.30GHz, 32\,GB RAM) within reasonable time (seconds up to several minutes). Unless specified otherwise, we use the nodes $P_{A_{3,n,2}} \subseteq \Omega$, $n \in \N$, $p=2$, which fulfil $(A2)$ from Definition~\ref{def:EA}.

\subsection{Approximation of algebraic varieties}

We start by comparing the three methods on the basic task of approximating several classic affine algebraic varieties $M \subseteq \R^3$ as
given by the following (global) parametrisations:
\begin{enumerate}
  \item[S1)] Ellipsoid \quad $\frac{x^2}{a^2} + \frac{y^2}{b^2} + \frac{z^2}{c^2} = 1$,\quad  $a,b,c \in \R\setminus\{0\}$.
  \item[S2)] Biconcave disc \quad $(d^2 + x^2 + y^2 + z^2)^3 - 8d^2(y^2 + z^2) - c^4 = 0$,  \quad $c<d \in \R\setminus\{0\}$.
  \item[S3)] Torus \quad $(x^2 + y^2 + z^2 + R^2 - r^2)^2 - 4R^2(x^2 + y^2) =0$, \quad $0 <r <R \in \R$
  \item[S4)] Genus 2 surface \quad $2y(y^2 - 3x^2)(1 - z^2) + (x^2 + y^2)^2 - (9z^2 - 1)(1 - z^2) = 0$
  \item[S5)] Klein bottle \quad  $(x^2 + y^2 + z^2 + 2y - 1)\big((x^2 +  y^2 + z^2 - 2y - 1)^2 - 8z^2\big)+ 16xz(x^2 + y^2 + z^2  - 2y - 1) = 0$.
\end{enumerate}

\begin{figure}[t!]
\includegraphics[width=\textwidth]{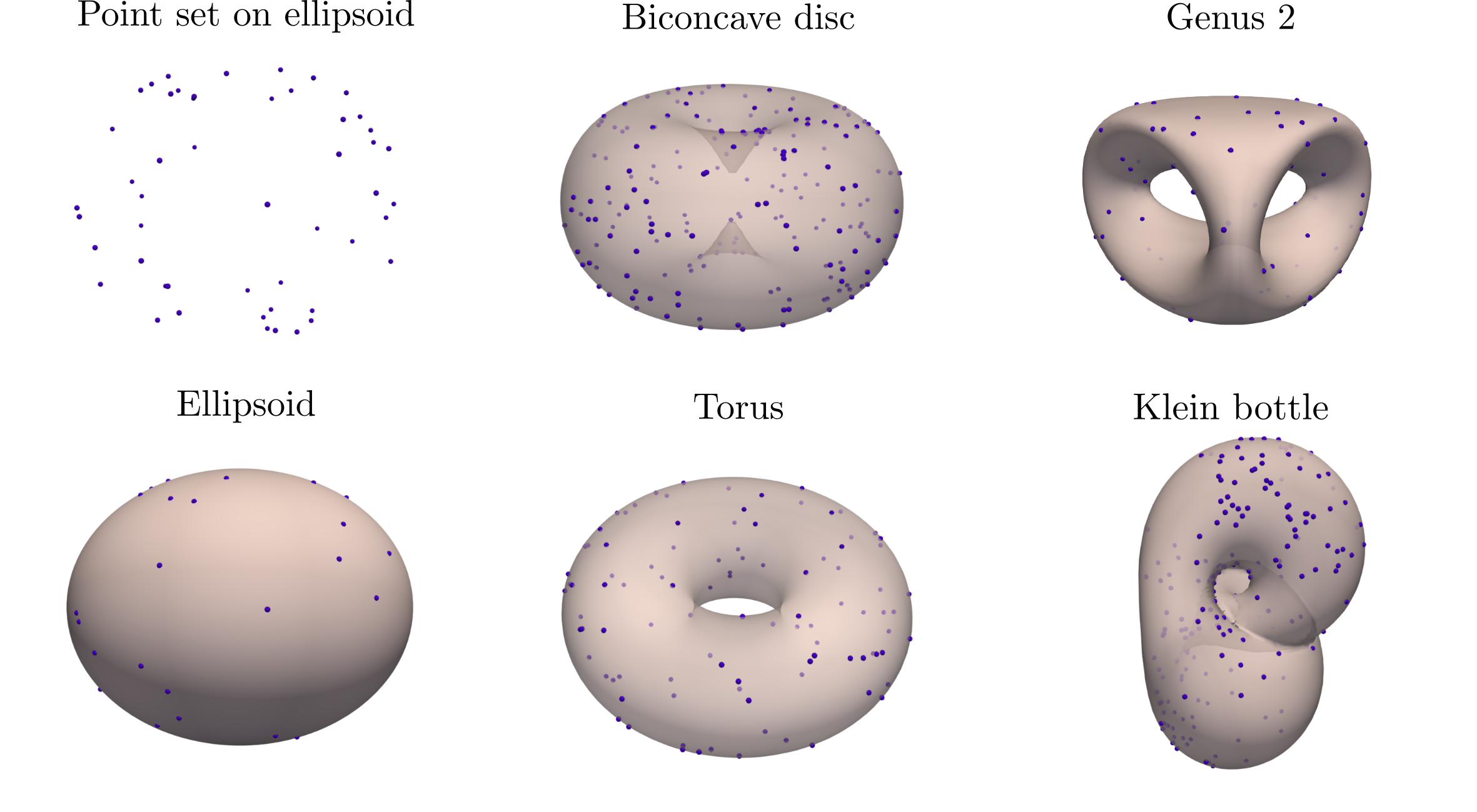}
\caption{The GPLS $Q^{-1}_M(0)$ derived from randomly sampled points (blue dots) on the five algebraic test surfaces (S1)--(S5).
\label{fig:surface_fitting}}
\end{figure}

\begin{experiment}[Surface reconstruction from regular point sets]\label{exp:REC} We sample $N\in \N$ random points
$P \subseteq S$, $|P|=N$, on each surface $S=M$ given by the algebraic varieties above,
as visualised in Fig.~\ref{fig:surface_fitting}.
All point positions are stored with machine precision (32-bit double-precision arithmetics), i.e., the formulas above hold for all $q \in P$ with an accuracy of $\approx 10^{-15}$.

When considering the multi-indices $A_{3,n,2}$ with $n =\deg(M) \in \N$ equal to the degree of the corresponding variety, then Corollary~\ref{cor:Level}$(i)$ applies to all algebraic varieties,
allowing us to compute the polynomial $Q_M \in \Pi_A$ with $M = Q_M^{-1}(0)$ using the GPLS method with the surface points $P$.
The quality of the GPLS approximation is measured for each true surface point $q \in P$ by
computing the shortest distance $\mathrm{dist}(q,M) \in \R^+$ to the GPLS surface when following the GPLS normal $\eta(x) = \nabla Q_M(x) / \|\nabla Q_M(x)\|$ due to classic Newton-gradient-descent till reaching $Q_M(q+ d_q\eta(q)) = 0$ (with machine precision).
\label{exp:SF}
\end{experiment}

The $L_{\infty}$-norm across all surface points and the number of points used ($N$) are reported in Table~\ref{tbl:fitting_errors} (columns ``surface fitting'' and ``$N$'').
We observe that the GPLS method approximates all surfaces, including the non-orientable,
self-intersecting Klein bottle, with an accuracy close to machine precision.
Several repetitions of the experiment for
different samples of random surface points produced comparable results differing in accuracy by less than one order of magnitude.
The same is true when measuring the fitting error on 100 randomly sampled test points $P_{\mathrm{test}}\subseteq S\setminus P$ that were not used for computing the GPLS.

\begin{table}[t!]
\centering
\begin{tabular}{lllr}
2D surface  & \multicolumn{2}{c}{$L_\infty$ error} & $N$  \\
 & surface fitting & coefficients & \\
\hline
Ellipsoid ($a = 0.8, b = 0.9, c = 1.0$)     & $8.96 \cdot 10^{-16}$ & $2.52 \cdot 10^{-15}$ & $50$\\
Biconcave disc ($d = 0.5, c = 0.375$)       & $9.90 \cdot 10^{-15}$ & $3.31 \cdot 10^{-6}$ & $200$ \\
Torus ($R = 0.5, r = 0.3$)                  & $1.13 \cdot 10^{-14}$ & $1.05 \cdot 10^{-12}$ & $100$ \\
Genus 2 surface                             & $1.19 \cdot 10^{-14}$ & $2.60 \cdot 10^{-11}$ & $100$\\
Klein bottle                                & $1.95 \cdot 10^{-12}$ & $1.58 \cdot 10^{-9}$  & $200$
\end{tabular}
\caption{Reconstruction errors for the GPLS method with different numbers of uniformly random surface points $N$ on the 2D surfaces given by the algebraic varieties shown in Fig.~\ref{fig:surface_fitting}.}
\label{tbl:fitting_errors}
\end{table}

\begin{experiment}[Coefficients reconstruction] We consider the GPLS $Q_M \in \Pi_A$ from Experiment~\ref{exp:REC} in canonical form $Q_M = \sum_{\alpha \in A_{3,n,2}}d_\alpha x^\alpha$ and normalise
$\widetilde Q_M = \lambda Q_{M}$ with $\lambda \in \R$ so that the leading coefficient $d_{\alpha}\in \R$, $\alpha = \mathrm{argmax}_{\alpha \in A_{3,n,2}} \{d_\alpha \neq 0\}$  coincides with the leading coefficient $c_\alpha$ of the original surface parametrisation polynomial $Q_S$, $S =(S1), \ldots, (S5)$ in canonical form. According to Corollary~\ref{cor:Level}$(i)$, the two polynomials
have to be identical, i.e., $\widetilde Q_M =Q_S$.
The $L_\infty$ difference $\|D-C\|_{\infty}$, $D =(d_{\alpha})_{\alpha \in A_{3,n,2}}$, $C =(c_{\alpha})_{\alpha \in A_{3,n,2}}$, of the GPLS and ground-truth coefficients is reported in Table~\ref{tbl:fitting_errors} (column ``coefficients'').
\label{exp:form}
\end{experiment}
Apart from the biconcave disc, all polynomial formulas are recovered close to machine precision.
The lower accuracy reached for the biconcave disc reflects its relatively high polynomial degree $n =6$, which makes representations in canonical polynomial basis imprecise.

Together, the results of Experiments~\ref{exp:REC} and \ref{exp:form} validate the GPLS method for computing global level-set surface approximations from regular point samples on (low-degree) algebraic surfaces.

\begin{table}[t!]
\centering
\begin{tabular}{lccr}
& \multicolumn{2}{c}{$L_\infty$ curvature error} & $N$  \\
Global Polynomial Level Set (GPLS) & $K_{\mathrm{mean}}$ & $K_{\mathrm{Gauss}}$ &  \\
\hline
Ellipsoid ($a = 1.0, b = 1.0, c = 1.0$) & $1.78 \cdot 10^{-15}$  & $3.55 \cdot 10^{-15}$  & $50$\\
Ellipsoid ($a = 1.0, b = 1.0, c = 0.6$) & $1.78 \cdot 10^{-15}$  & $5.77\cdot 10^{-15}$  & $50$\\
Ellipsoid ($a = 0.6, b = 0.6, c = 1.0$) & $3.33 \cdot 10^{-15}$  & $1.24\cdot 10^{-14}$  & $50$\\
Ellipsoid ($a = 0.6, b = 0.8, c = 1.0$) & $3.11 \cdot 10^{-15}$  & $7.11\cdot 10^{-15}$  & $50$\\
\hline
Biconcave disc ($d = 0.5, c = 0.375$) & $1.46 \cdot 10^{-10}$ & $6.38 \cdot 10^{-10}$ & $200$ \\
Biconcave disc ($d = 0.5, c = 0.4$) & $5.21 \cdot 10^{-11}$ & $2.42 \cdot 10^{-10}$ & $200$ \\
Biconcave disc ($d = 0.4, c = 0.2$) & $9.24 \cdot 10^{-11}$ & $7.78 \cdot 10^{-11}$ & $200$ \\
\hline
Torus ($R = 0.5, r = 0.3$) & $3.69 \cdot 10^{-13}$ & $2.21 \cdot 10^{-12}$ & $100$ \\
Torus ($R = 0.4, r = 0.3$) & $4.89 \cdot 10^{-13}$ & $6.73 \cdot 10^{-12}$ & $100$ \\
Torus ($R = 0.5, r = 0.1$) & $4.70 \cdot 10^{-12}$ & $1.71 \cdot 10^{-11}$ & $100$ \\
\hline
Genus 2 surface & $9.40 \cdot 10^{-13}$ & $3.46 \cdot 10^{-12}$ & 100 \\
 & & & \\
Curved Finite Elements (CFE)  & & & \\
\hline
Ellipsoid ($a = 1.0, b = 1.0, c = 1.0$) & $2.23 \cdot 10^{-7}$   & -  & $54722$\\
Ellipsoid ($a = 1.0, b = 1.0, c = 0.6$) & $8.66 \cdot 10^{-7}$   & -  & $46082$\\
Ellipsoid ($a = 0.6, b = 0.6, c = 1.0$) & $1.07 \cdot 10^{-7}$  & -  & $46082$\\
Ellipsoid ($a = 0.6, b = 0.8, c = 1.0$) & $3.57 \cdot 10^{-7}$   & -   & $46082$\\
\hline
Torus ($R = 0.5, r = 0.3$) & $8.7 \cdot 10^{-7}$ & - & $463680$ \\
Torus ($R = 0.4, r = 0.3$) & $5.29 \cdot 10^{-7}$ & - & $124800$ \\
Torus ($R = 0.5, r = 0.1$) & $4.06 \cdot 10^{-6}$ & - & $79680$ \\
& & & \\
Closest-Point Finite Differences (CP-FD) & & & \\
\hline
Ellipsoid ($a = 1.0, b = 1.0, c = 1.0$) & $1.31 \cdot 10^{-7}$ & $2.63 \cdot 10^{-7}$ & $725912$ \\
Ellipsoid ($a = 1.0, b = 1.0, c = 0.6$) & $2.45 \cdot 10^{-5}$ & $6.16 \cdot 10^{-5}$ & $543632$ \\
Ellipsoid ($a = 0.6, b = 0.6, c = 1.0$) & $3.55 \cdot 10^{-5}$ & $1.93 \cdot 10^{-4}$ & $383600$ \\
Ellipsoid ($a = 0.6, b = 0.8, c = 1.0$) & $2.10 \cdot 10^{-5}$ & $6.85 \cdot 10^{-5}$ & $461664$ \\
\hline
Torus ($R = 0.5, r = 0.3$) & $4.82 \cdot 10^{-5}$ & $2.96 \cdot 10^{-4}$ & $770080$ \\
Torus ($R = 0.4, r = 0.3$) & $1.28 \cdot 10^{-3}$ & $9.55 \cdot 10^{-3}$ & $616176$ \\
Torus ($R = 0.5, r = 0.1$) & $1.74 \cdot 10^{-3}$ & $1.08 \cdot 10^{-2}$ & $257056$ \\
& & & \\
\end{tabular}
\caption{Errors of curvature computations using the three methods (GPLS, CFE, CP-FD) for orientable algebraic surfaces represented using different numbers of points $N$.}
\label{tbl:curv_errors}
\end{table}

\subsection{Mean and Gauss curvatures of algebraic varieties}\label{sec:Num_curv}

After having validated the surface approximation properties of the GPLS method, we test how accurately differential geometric quantities of the surface can be computed from the GPLS parametrisation. We first consider Gauss and mean curvature, which are
2$^{\text{nd}}$-order derivatives, before looking at the 4$^{\text{th}}$-order Laplacian of curvature in the subsequent section.

For ellipsoids and tori, the analytical expressions are known:
\begin{enumerate}
  \item[S1)] Ellipsoid \quad $K_{\mathrm{mean}} = \frac{|x^2 + y^2 + z^2 - a^2 - b^2 - c^2|}{2(abc)^2 (\frac{x^2}{a^4} + \frac{y^2}{b^4} + \frac{z^2}{c^4})^{3/2}}$ \quad and \quad  $K_{\mathrm{Gauss}}= \frac{1}{(abc)^2 \left( \frac{x^2}{a^4} + \frac{y^2}{b^4} + \frac{z^2}{c^4} \right)^2}$.
  \item[S3)] Torus \quad \quad $K_{\mathrm{mean}}  = \frac{R + 2r \cos \theta}{2r(R + r \cos \theta)}$ \quad and \quad $K_{\mathrm{Gauss}} = \frac{\cos \theta}{r(R + r \cos \theta)}$, where we used
   toric coordinates $(x,y,z) = (R + r \cos \theta) \cos \varphi,\, (R + r \cos \theta) \sin \varphi,\, r \sin \theta)$, $\varphi,\theta \in [0,2\pi)$.
\end{enumerate}
Analytic expressions for the biconcave disc and the genus 2 surface also exist. However, for the sake of simplicity, we used {\sc Mathematica 11.3} for
the ground-truth computations in these cases.

\begin{table}[t]
\centering
\begin{tabular}{lcr}
& $L_\infty$ error & $N$  \\
Global Polynomial Level Set (GPLS) & $\upDelta_{M} K_{\mathrm{mean}}$ &  \\
\hline
Ellipsoid ($a = 1.0, b = 1.0, c = 1.0$) & $2.09 \cdot 10^{-11}$  & $50$\\
Ellipsoid ($a = 1.0, b = 1.0, c = 0.6$) & $4.93 \cdot 10^{-11}$  & $50$\\
Ellipsoid ($a = 0.6, b = 0.6, c = 1.0$) & $8.08 \cdot 10^{-11}$  & $50$\\
& & \\
Closest-Point Finite Differences (CP-FD) & & \\
\hline
Ellipsoid ($a = 1.0, b = 1.0, c = 1.0$) & $1.18 \cdot 10^{-3}$ & $725912$ \\
Ellipsoid ($a = 1.0, b = 1.0, c = 0.6$) & $1.59 \cdot 10^{-1}$ & $543632$ \\
Ellipsoid ($a = 0.6, b = 0.6, c = 1.0$) & $3.00 \cdot 10^{-1}$ & $383600$ \\
& &
\end{tabular}
\caption{Maximum errors of the Laplacian of mean curvature $\upDelta _M K_{\mathrm{mean}}$ computed using GPLS and CP-FD for axisymmetric ellipsoids.}
\label{tbl:lap_curv_errors}
\end{table}

\begin{experiment}[Curvature computation] We consider only the orientable surfaces from Experiment~\ref{exp:REC} and compute their
mean and Gauss curvatures from the GPLS approximation according to Eqs.~\eqref{eq:GC} and \eqref{eq:MC}. We compare the results with those computed using the CFE and CP-FD methods.
While GPLS can compute the curvatures once $Q_M$ is determined, CFE and CP-FD rely on feasible computational meshes or grids.
Some of the benchmark computations for those methods therefore had to be skipped due to incommensurate implementation effort.
Curvature errors are measured at each surface/grid point and the $L_\infty$ norm reported  in Table~\ref{tbl:curv_errors} along with the total number of surface/grid points $N$ used by the methods.
\end{experiment}

The curvatures computed by GPLS are seven to eight orders or magnitude more accurate than those computed using either CFE or CP-FD methods. In some cases, the GPLS reaches machine precision. The computational cost in terms of the number of points $N$ required is also orders of magnitude better for GPLS than for CFE and CP-FD.

Moreover, GPLS is the only method that allows evaluating curvature formulae at any location $x_0 \in M$. This allows us to compute the GPLS errors
at the points used by CFE and CP-FD, respectively. The resulting GPLS accuracies deviate by  less than an order of magnitude from those reported in Table~\ref{tbl:curv_errors} on the points used to derive the GPLS.

\subsection{Laplacian of mean curvature}\label{sec:Num_Lap}

Next, we consider computing a 4$^{\text{th}}$-order differential quantity of the surfaces, the Laplacian of mean curvature. The reference values for axisymmetric ellipsoidal surfaces (with $a = b$)  were computed using {\sc Mathematica 11.3}.

\begin{experiment}[Laplacian of mean curvature] We compute the surface Laplacian \linebreak
  $\upDelta _M K_{\mathrm{mean}}$ of the mean curvature using Eq.~\eqref{eq:lapMC} for a GPLS.
While Gauss curvature depends non-linearly on the Hessian, Eq.~\eqref{eq:GC} computing this quantity using CFE is not straightforward, which is why a direct comparison is omitted.
The results computed using the CP-FD and GPLS methods are reported in Table~\ref{tbl:lap_curv_errors}.
\end{experiment}

Also for the Laplacian of mean curvature, the GPLS results are orders of magnitude more accurate than the CP-FD ones, while using much fewer surface points. However, both methods lose about 4 orders of magnitude in precision compared to computing mean curvature alone (cf.~Table \ref{tbl:curv_errors}, where the same surfaces were considered).

\subsection{Non-algebraic surfaces}\label{sec:Num_surf}
In order to test the GPLS approach on non-algebraic surfaces, we consider the well-known example surface $S_B$
given by the
  \emph{Stanford Bunny} dataset\footnote{available from http://graphics.stanford.edu/data/3Dscanrep/} containing 35,947 surface points with associated surface-normal vectors. Fig.~\ref{fig:bunny_fitting}. We complement our investigations by considering the \emph{Spot dataset}\footnote{available from https://www.cs.cmu.edu/~kmcrane/Projects/ModelRepository/}, Fig.~\ref{fig:cow_fitting}.

  \begin{figure}[t!]
    \centering
\begin{tabular}{ll}
    \includegraphics[width=0.4\textwidth]{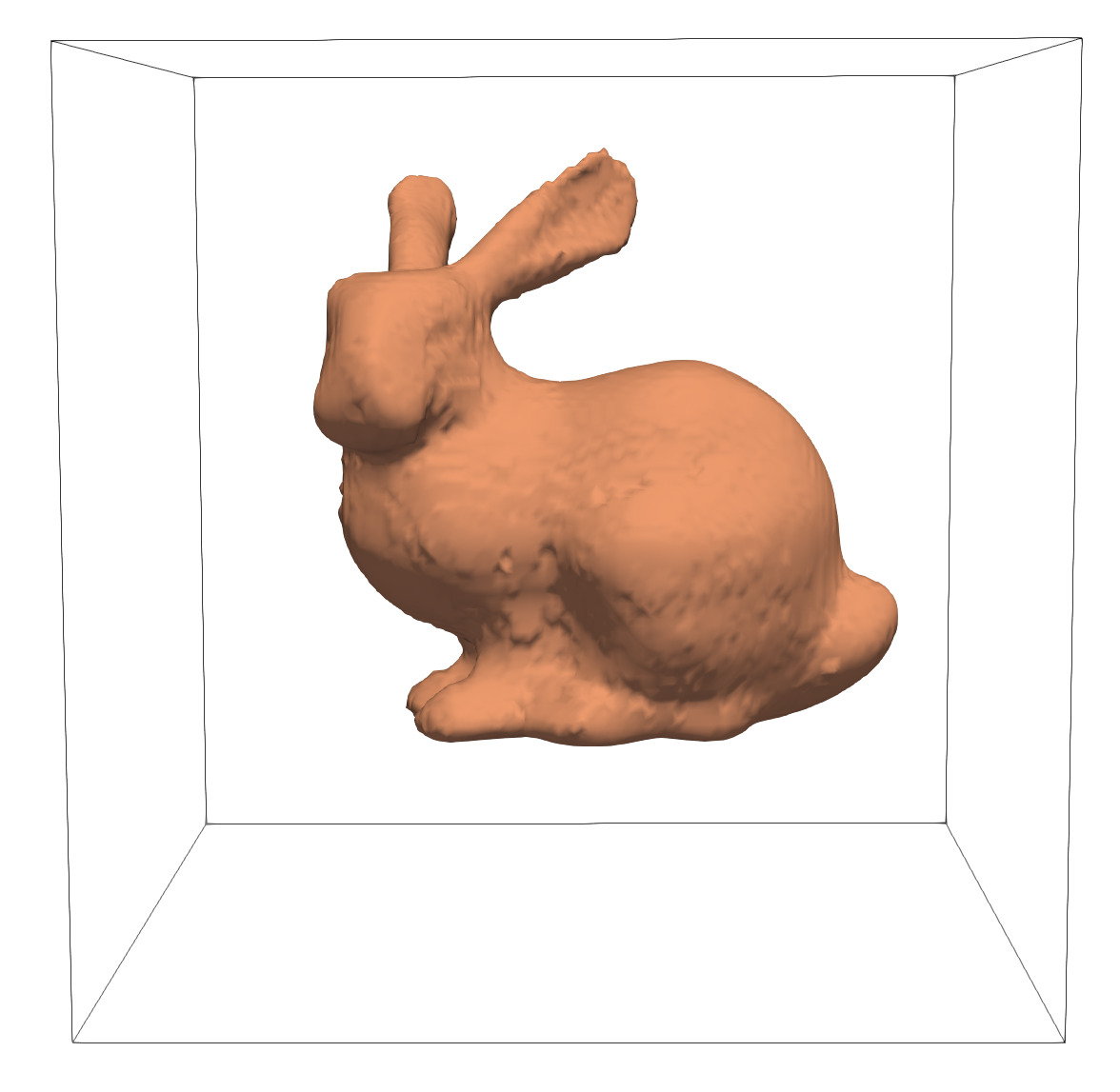} &
    \includegraphics[width=0.509\textwidth]{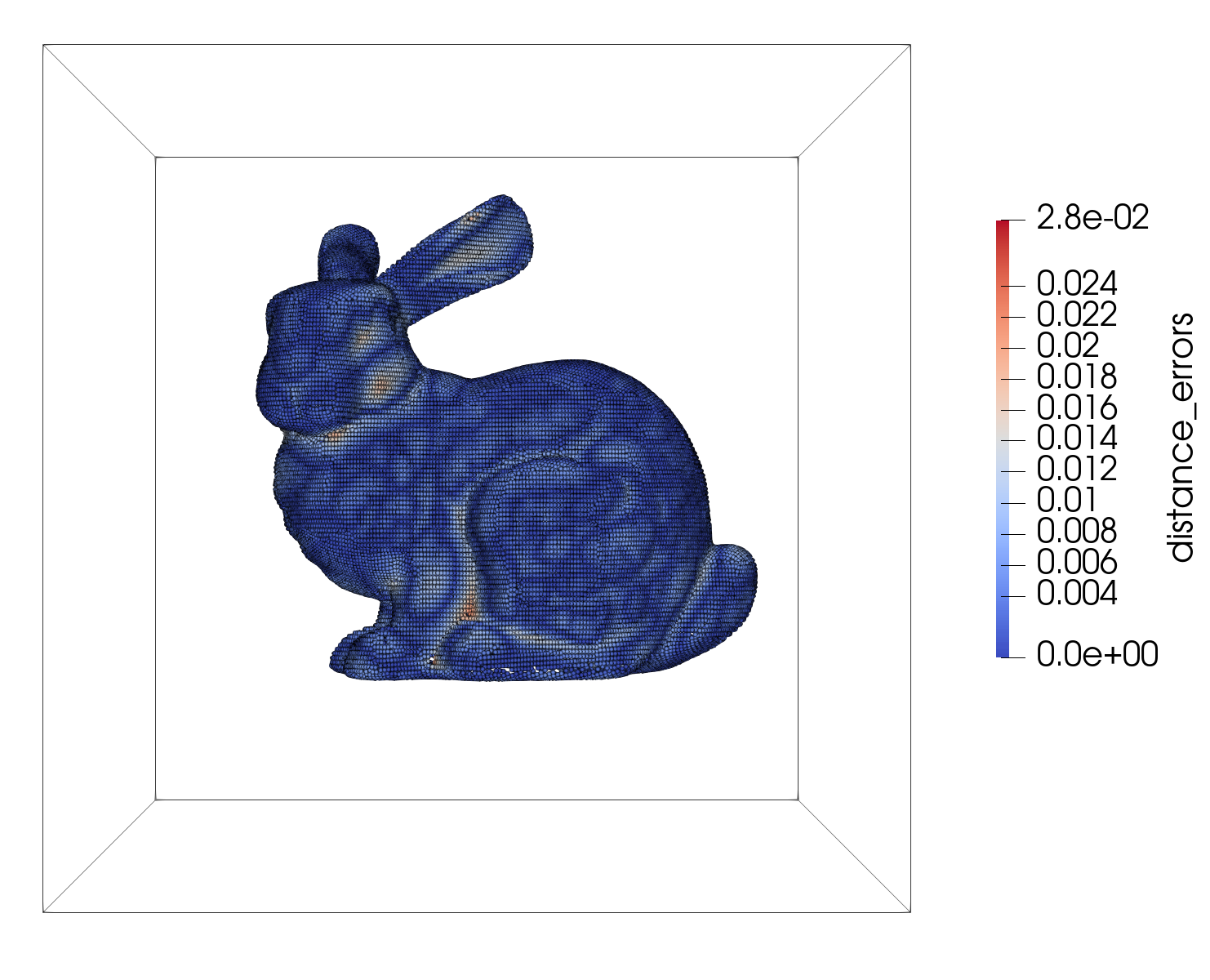}\\
   {\sc ParaView}'s iso-surface of the GPLS & \emph{Stanford Bunny}\footnote{available from http://graphics.stanford.edu/data/3Dscanrep/} with GPLS distance error
  \end{tabular}
  \caption{The GPLS approximation of the Stanford Bunny dataset. Left: visualisation of the level set $Q_{M}^{-1}(0)$ for $l_2$-degree $n=9$ derived from 4000 surface points. Right: The entire dataset with all $35.947$ surface points with color corresponding to the closest point distance to the GPLS. 
  \label{fig:bunny_fitting}}
  \end{figure}

\begin{experiment}[Non-Algebraic surface]\label{exp:Bunny} We repeat Experiment~\ref{exp:REC} for the Stanford Bunny $S_B$ and the Spot surface $S_C$, for which Corollary~\ref{cor:Level}$(i)$ does not apply. Therefore, we sub-sample 4000 points $P$ and their normals $\eta(q)$, $q \in P$, uniform at random.
By moving the points along the dataset normals $q' =  q + \lambda \eta(q)$, $\lambda = \pm 0.005, \pm 0.01, \pm 0.035$, we generate a surrounding narrow band
with (relaxed) signed distance function $d(q') = \lambda$.
The GPLS
  $Q_M$ is derived by fitting $d$ according to section~\ref{sec:SDF}.
The GPLS quality is measured by computing the shortest distances $\mathrm{dist}(q,M) \in \R^+$ across the entire dataset $q \in P$ as in Experiment~\ref{exp:SF}. The maximum and mean errors (distances) $E_\infty$ / $E_{\mathrm{mean}}$  are listed in Tables~\ref{tbl:bunny_fit_errors},\ref{tbl:spot_fit_errors} for different choices of polynomial degree and $l_p$-degree.
\end{experiment}

For both datasets the lowest distance error (in bold) is reached for Euclidean $l_2$-degree, reflecting the discussion in Section~\ref{sec:App}, Remark~\ref{rem:FIT} and \cite{Lloyd2}
on the optimality of that choice. Fig.~\ref{fig:bunny_fitting}(left) and Fig.~\ref{fig:cow_fitting}(left) show the surface visualised from the most accurate GPLS using {\sc ParaView}'s iso-surface rendering. The colorbar plots in Fig.~\ref{fig:bunny_fitting}(right), Fig.~\ref{fig:cow_fitting}(left) indicate the distance errors of the GPLS to the original datasets, respectively.
For the Stanford bunny the GPLS requires  $|C|=486$ polynomial coefficients, $C \in \R^{|A_{3,9,2}|}$, hence delivering a representation of $S_B$ with a compression ratio $r = 35.947 / 486 \approx 74$. For the Spot dataset $|C|=847$ polynomial coefficients are required, $C \in \R^{|A_{3,12,2}|}$, yielding  compression ratio  $r = 5856 / 847 =7$.
Regarding the results, we expect that the shown examples are at the limit of what the GPLS method can handle in terms of geometric complexity.

\begin{figure}[t!]
  \includegraphics[width=0.51\textwidth]{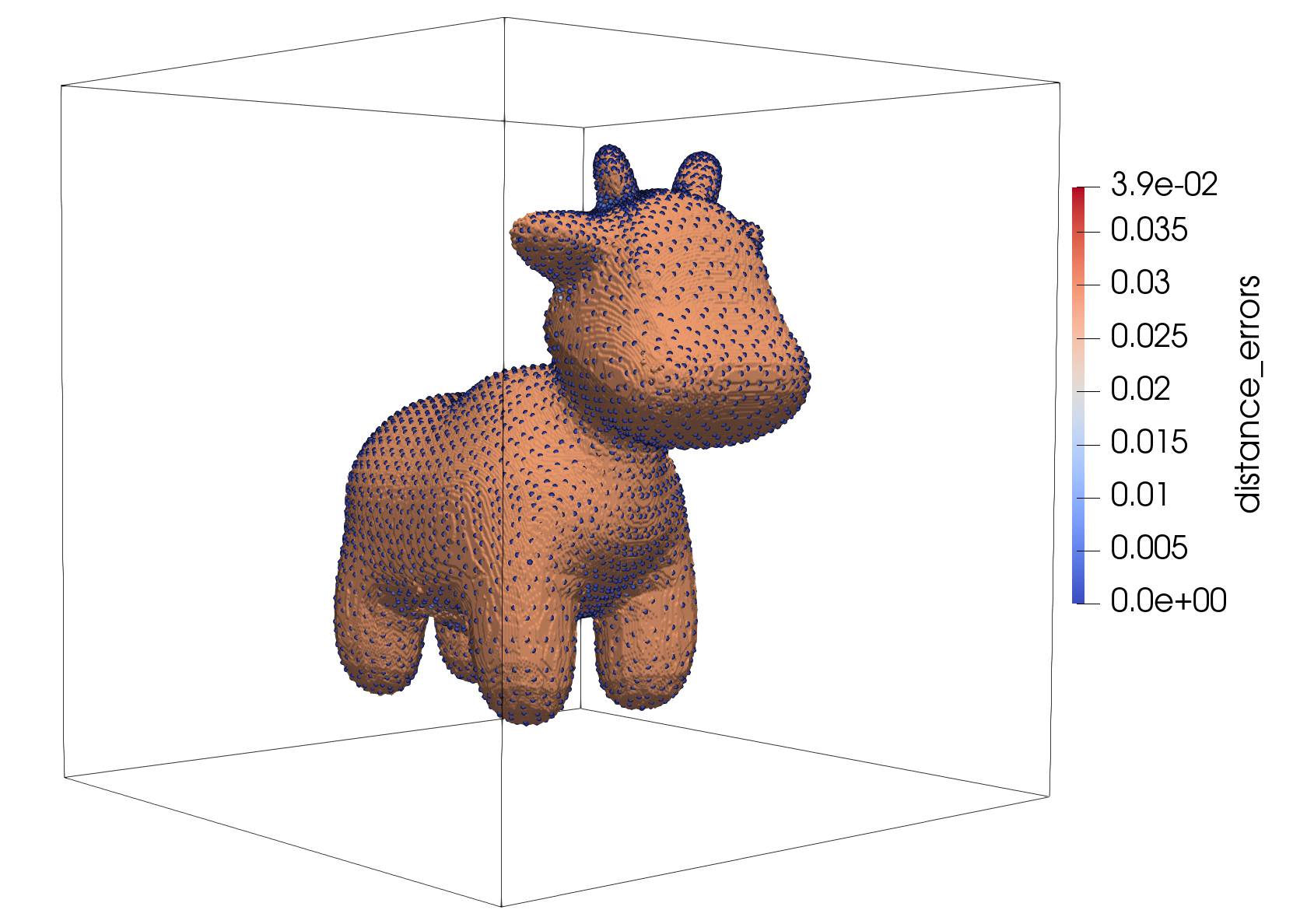}
  \includegraphics[width=0.44\textwidth]{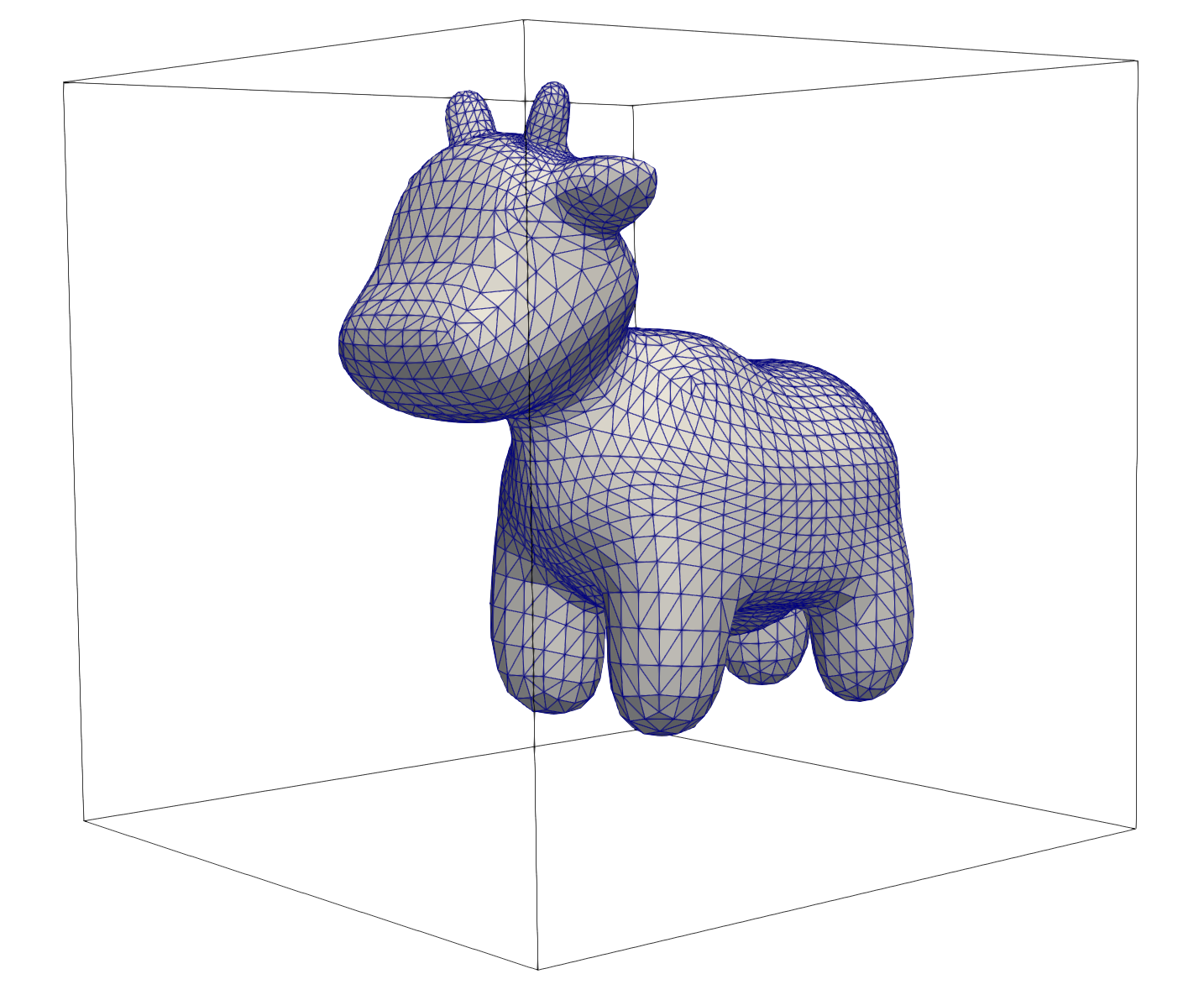}
   \centering
 \begin{tabular}{lccr}
 {\sc ParaView}'s iso-surface of the GPLS with
   & & &  \emph{Spot} dataset\footnote{available from https://www.cs.cmu.edu/~kmcrane/Projects/ModelRepository/}\\
  distance errors to the Spot triangle centres  & & &
\end{tabular}
\caption{The GPLS approximation of the Spot dataset. Left: {\sc ParaView}'s isocontour visualization of the level set $Q_{M}^{-1}(0)$ for $l_2$-degree $n=12$ derived from 4000 surface points with color corresponding to the closest point distance to the original Spot dataset (visualised as points). Right: The entire Spot dataset with all 5856 triangles. Triangle centres and their normals are used as data inputs for the GPLS.
\label{fig:cow_fitting}}
\end{figure}

\begin{table}[ht]
\centering
\begin{tabular}{lrrr}
degree $n$ & $E_\infty$ / $E_{\mathrm{mean}}$, $p=1$  &  $E_\infty$ / $E_{\mathrm{mean}}$, $p=2$ & $E_\infty$ / $E_{\mathrm{mean}}$, $p=\infty$   \\
\hline
$7$ & $0.418$ / $0.006$   & $0.075$ / $0.004$ & $0.035$ / $0.003$  \\
$8$ & $0.143$ / $0.005$   & $0.054$ / $0.003$ & $0.055$ / $0.003$\\
$\textbf{9}$  & $0.102$ / $0.004$   & $\textbf{0.029}$ / $\textbf{0.003}$ & $0.070$ / $0.002$  \\
$10$   & $0.043$ / $0.004$    & $0.049$ / $0.003$ &  $ 0.167$ / $0.002$ \\
$11$ & $0.068$ / $0.003$     & $0.022$ / $0.001$    & $0.160$ / $0.002$  \\
\end{tabular}

\caption{Maximum fitting (distance) errors for GPLS approximations of the Stanford Bunny surface with various polynomial degrees $n$ and $l_p$-degrees $p$. The best fit is highlighted in bold.}
\label{tbl:bunny_fit_errors}
\end{table}

\begin{table}[ht]
\centering
\begin{tabular}{lrrr}
degree $n$ & $E_\infty$ / $E_{\mathrm{mean}}$, $p=1$  &  $E_\infty$ / $E_{\mathrm{mean}}$, $p=2$ & $E_\infty$ / $E_{\mathrm{mean}}$, $p=\infty$   \\
\hline
$8$   & $0.099$ / $0.005$ & $0.110$ / $0.0030$ & $0.055$ / $0.0020$  \\
$9$   & $0.182$ / $0.004$ & $0.068$ / $0.0010$ & $0.123$ / $0.0005$  \\
$10$  & $0.082$ / $0.004$ & $0.063$ / $0.0010$   & $0.187$ / $0.0020$   \\
$11$  & $0.085$ / $0.003$ & $0.042$ / $0.0005$  & $ 0.036$ / $0.0003$   \\
$\textbf{12}$ &   $0.081$ / $0.001$    & $\textbf{0.029}$ / $\textbf{0.0003}$   & $0.044$ / $0.0010$   \\
\end{tabular}
\caption{Maximum fitting (distance) errors for GPLS approximations of the Spot surface with various polynomial degrees $n$ and $l_p$-degrees $p$. The best fit is highlighted in bold.}
\label{tbl:spot_fit_errors}
\end{table}

\begin{figure}[t]
\centering
\includegraphics[width=1.0\textwidth]{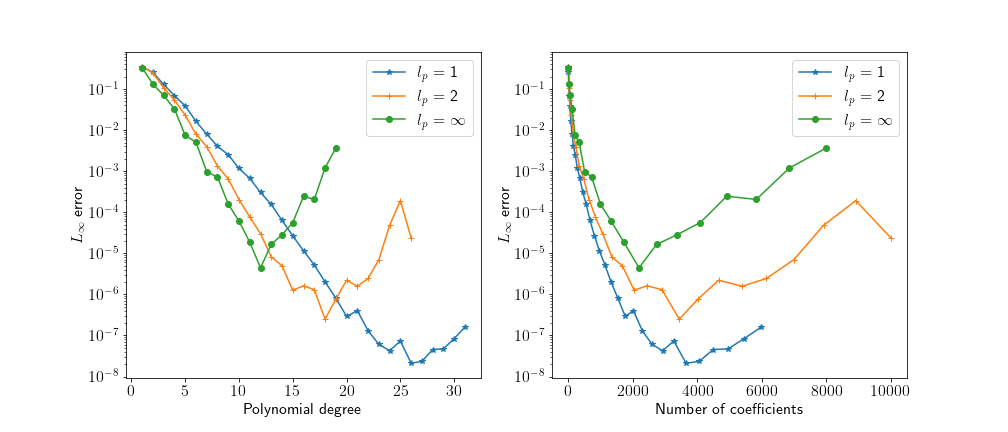}
\caption{Maximum approximation errors for fitting the Runge function on the Stanford Bunny from 10,000 uniformly randomly placed surface samples for different polynomial degrees. The total numbers of coefficients are plotted on the right.
\label{fig:bunny_runge_fitting}}
\end{figure}

Given the GPLS approximation of the Stanford Bunny surface $S_B$, we next address the task of globally fitting a scalar function $f: S_B \lo \R$ on the surface.

\begin{experiment}[Function fitting on non-algebraic surface]  We
sample the Runge function $f(x) = 1/(1 + |x|^2)$
at
10,000 randomly chosen surface points $P \subseteq S_B$ on the Stanford Bunny and apply the multivariate regression scheme from Remark~\ref{rem:fit} to derive approximations $Q_{P, f, A_{3,n,p}} \approx f$ of $f$ on $S_B$.
The $L_\infty$ approximation errors are measured across $500$ random surface points not used for the regression and plotted in Fig.~\ref{fig:bunny_runge_fitting} as a function of $p$ and $n$.
\label{exp:fct_fit}
\end{experiment}

All regressions achieve reasonable approximation of
the Runge function. Regression with respect to Euclidean
and maximum degree ($p=2,\infty$) convergences faster with degree $n$ than total-degree regression ($p=1$), confirming the expectations of section~\ref{sec:App}.
However, $l_1$ regression reaches the overall best approximation. In contrast, $l_{2,\infty}$ regression becomes unstable (for $n>11$ or $n> 17$, respectively) with maximum degree $p=\infty$ performing worst. Euclidean regression ($p=2$) reaches a $10^{-7}$ approximation fastest (for degree $n =17$), but the specific sample point distribution used here hampers its optimality in terms of coefficient count, as formulated in Corollary~\ref{cor:uni1}.
An extended discussion of these effects is provided by \cite{platte:2011,Lloyd2}, including an explanation for the observed numerical instabilities.

Since the Runge function is highly varying and notoriously hard to interpolate (``Runge's Phenomenon''), the accuracies reached here suggest that a larger class of functions can be approximated using the present method,
supporting classic computational tasks in differential geometry.

\section{Conclusion}\label{sec:con}

We have combined basic algebraic geometry and classic numeric \linebreak
analysis to approximate
smooth closed surfaces $S\subset \R^3$ by algebraic varieties with global polynomial level set (GPLS) $M=Q_M^{-1}(0)\approx S$. We proved uniqueness of these approximations in  Theorem~\ref{theorem:Dual}, with further discussion given in Remark~\ref{rem:unique}.
We presented
numerical experiments of computing differential-geometric quantities (curvatures and Laplacian of curvature) of algebraic surfaces approximated by their GPLS. Both the computational efficiency, in terms of the surface point counts $|P|$, $P\subseteq S$, as well as the accuracy reached by the GPLS method were superior to Curved Finite Elements (CFE) and to Closest-Point Finite Differences (CP-FD) by orders of magnitude.

We then estimated the limitations of GPLS methods in terms of the reachable surface complexity in Theorem~\ref{theo:APP} and numerically demonstrated them in the example of the \emph{Stanford Bunny} and the \emph{Spot dataset}. We then achieved global approximation of the highly varying Runge function on the surface of the Stanford Bunny, suggesting that the presented approach applies to a larger class of surfaces and functions, as for example
occurring in \emph{biophysics} \cite{mietke2019minimal,seifert1997configurations,sbalzarini2006simulations,colin2022quantifying} or mechanics \cite{schwartz1998simulation,sander2016numerical}.

In the present work, we focused on static surfaces. Our results, however, suggest that the proposed method could also provide a starting point
for dynamic surface deformation simulations, potentially providing an alternative to well-established \emph{level set methods and fast marching methods} \cite{sethian1999level}.

We also note that the concept of GPLS is not limited to two-dimensional surfaces, but can be extended to higher-dimensional embedded
(hypersurfaces) manifolds $\mathcal{M}\subseteq \R^m$, $m \in \N$. There, the computational  efficiency of the GPLS approach in terms of the required number of points $P\subseteq \mathcal{M}$ may pave the way for realising numerical the manifold models required, for instance, for Ricci-DeTurck flow  simulations \cite{fritz2015numerical}.

\section*{Acknowledgments}
We are deeply grateful for the insights and support we received  in discussions with
Prof.~Oliver Sander (TU Dresden). We want to thank Dan Fortunato (CCM Simons Foundation) for the Spot dataset and fruitful discussions on the subject we had.


\bibliographystyle{siamplain}
\bibliography{/Users/admin_hecht93/Documents/REF/Ref.bib}
\end{document}